\newtheorem{definition}{Definition}
\newtheorem{remark}{Remark}
\newtheorem{lemma}{Lemma}
\newtheorem{theorem}{Theorem}
\newtheorem{theorem(Perfect Graph Theorem)}{Theorem (Perfect Graph Theorem)}
\newtheorem{corollary}{Corollary}
\newtheorem{conjecture}{Conjecture}
\title{Relative Fractional Packing Number and Its Properties\footnote{This research was conducted during a summer research internship at the Chinese University of Hong Kong.}}
\author{
 Mehrshad Taziki\footnote{mehrshad.taziki@sharif.edu} \\ Sharif Univeristy of Technology
}
\date{}
\begin{document}
\maketitle

\begin{abstract}
The concept of the \textit{relative fractional packing number} between two graphs $G$ and $H$, initially introduced in \cite{alipour2023relative}, serves as an upper bound for the ratio of the zero-error Shannon capacity of these graphs. Defined as:
\begin{equation*}
\sup\limits_{W} \frac{\alpha(G \boxtimes W)}{\alpha(H \boxtimes W)}
\end{equation*}
where the supremum is computed over all arbitrary graphs and $\boxtimes$ denotes the strong product of graphs.

This article delves into various critical theorems regarding the computation of this number. Specifically, we address its $\mathcal{NP}$-hardness and the complexity of approximating it. Furthermore, we develop a conjecture for necessary and sufficient conditions for this number to be less than one. We also validate this conjecture for specific graph families. Additionally, we present miscellaneous concepts and introduce a generalized version of the independence number that gives insights that could significantly contribute to the study of the relative fractional packing number.
\end{abstract}

\section{Introduction}
The problem of determining the Shannon zero-error capacity of a graph remains as a foundational challenge in Information Theory, boasting diverse applications. One notable application involves computing the zero-error capacity of a noisy channel which is represented by a graph \cite{Krner1998ZeroErrorIT}. A significant relation exists between this capacity and the independence number of a graph. Specifically, it is situated between the independence number and the fractional packing number (also known as the fractional independence number). Despite its fundamental importance, calculating this graph property remains a challenging task.

In their work \cite{alipour2023relative}, Alipour and Gohari introduced the concept of the relative fractional packing number as an innovative approach to address this challenging problem. Rather than directly bounding the zero-error capacity, they introduced a quantity that, for a pair of graphs $G$ and $H$, provides an upper bound on the ratio of their zero-error Shannon capacities.

Given the recent introduction of this quantity, there are numerous intriguing problems and challenges related to it to explore.

\subsection{Our Contribution}

In this paper, we focus on the fractional packing number and its significance. In our research, we aim to explore the applications of this concept and establish its connections with other graph invariants, especially the Shannon number and the independence number of graphs. Throughout this paper, we present a collection of theorems and properties. Our objective is to provide valuable insights that can contribute to the discovery of new findings related to this graph quantity.

One of the primary objectives of our study was to investigate the computational complexity of computing and approximating the fractional packing number. By drawing parallels between this quantity and the independence number of a graph, we were able to establish that computing the fractional packing number is an $\mathcal{NP}$-Hard problem. Furthermore, we demonstrated that it is also Poly-APX-hard, showing the challenge to develop efficient approximation algorithms.

In our research, we specifically examined the case when both graphs involved in calculating the fractional packing number are cycles, and we were able to calculate the exact value of this quantity in this case. By employing a generalized technique, we derived lower bounds for this quantity in the general case. We then mentioned some tight cases for this lower bound.

Section \ref{sec:3} of this paper is dedicated to the investigation of scenarios where the fractional packing number is below one. We aimed to identify necessary and sufficient conditions for this phenomenon and formulated a conjecture to address this concept. Additionally, we provided evidence for the validity of this conjecture by proving its soundness for well-known families of graphs, including perfect graphs and cycles.

Lastly, we introduced a generalized concept of the independence number and established a set of theorems concerning this quantity. We also highlighted the close relationship between this generalized independence number and the fractional packing number, suggesting the potential utilization of the independence number in future studies of the relative fractional packing number.

\section{Preliminaries}
In this section, we introduce the necessary concepts and background knowledge related to our study.

Let $G$ be a simple graph. We denote the independence number of $G$ by $\alpha(G)$. Computing the independence number is a known $\mathcal{NP}$-Hard problem \cite{book_1975}, and it is even Poly-APX-complete \cite{BAZGAN2005272}. Two sets of vertices, $S$ and $T$, are said to be disconnected if and only if their intersection is empty and there are no edges between them.

A graph $G$ is called vertex-transitive or point-symmetric if and only if for any pair of vertices $u, v \in V(G)$, there exists an automorphism $\pi$ of $G$ such that $\pi(u) = v$. Note that an automorphism for a graph $G$ is just an isomorphism from $G$ to itself.

The strong product of two graphs $G$ and $H$, denoted as $G \boxtimes H$, is a graph product commonly utilized in combinatorics and information theory. The vertex set of $G \boxtimes H$ is the Cartesian product of the vertex sets of $G$ and $H$. Two vertices $(u_1, v_1)$ and $(u_2, v_2)$ in $G \boxtimes H$ are adjacent if and only if any of the following conditions hold:
\begin{itemize}
\item $u_1 u_2 \in E(G)$ and $v_1 v_2 \in E(H)$
\item $u_1 = u_2$ and $v_1 v_2 \in E(H)$
\item $u_1 u_2 \in E(G)$ and $v_1 = v_2$
\end{itemize}
Using this product, the Shannon zero-error capacity of a graph $G$ is defined as $\theta(G)$, which is obtained as the limit of $\sqrt[n]{\alpha(G^{\boxtimes n})}$ as $n$ approaches infinity. The Shannon zero-error capacity plays a crucial role in evaluating the zero-error capacity of a channel represented by the graph $G$ \cite{Krner1998ZeroErrorIT}.

Calculating the Shannon zero-error capacity efficiently remains a challenge, and even for small graphs like $C_7$, the exact value is unknown. Lovász showed that $\theta(C_5) = \sqrt{5}$. To the best of our knowledge, the Lovász number of a graph \cite{lovasz1979shannon} and the Haemers' number of a graph \cite{hamer} are the most commonly used upper bounds for the Shannon capacity of a graph.

The Integer Programming formulation of the independence number of a graph $G$ is as follows:
\begin{equation*}
\begin{array}{lll}
\text{maximize } & \sum\limits_{v \in V} x_{v} \\
\text{subject to } & \sum\limits_{v \in C} x_{v} \leq 1 \, \, \forall \, C \in \mathcal{C} \\
& x \geq 0
\end{array}
\end{equation*}
Here, $\mathcal{C}$ represents the set of all cliques in the graph $G$. The optimal value of the relaxed linear program of the optimization problem above is known as the fractional packing number, denoted by $\alpha^{*}(G)$.

In \cite{hales1973numerical}, Hales demonstrated that:
\[
\alpha^{*}(G) = \sup\limits_{W} \frac{\alpha(G \boxtimes W)}{\alpha(W)}
\]
Here, the supremum is taken over all arbitrary graphs $W$. Additionally, Hales proved that this supremum is indeed a maximum. He also constructed a maximizer for this term.

Motivated by this, Alipour and Gohari \cite{alipour2023relative} introduced the concept of the relative fractional packing number of two graphs. It is defined as follows:
\[
\sup\limits_{W} \frac{\alpha(G \boxtimes W)}{\alpha(H \boxtimes W)}
\]
They provided a linear programming formulation to compute this number and established several properties relating to bounding the ratios of different graph properties. We delve deeper into this topic in Section \ref{sec:3}.

\section{Calculating the Relative Fractional Packing Number}
\label{sec:2}
This section mentions some results regarding the calculation of $\alpha^{*}(G|H)$. Firstly, we discuss some theorems about the complexity of calculating the relative fractional packing number and its hardness of approximation. Moreover, we will calculate this number exactly when both graphs $G$ and $H$ are cycles. Finally, we will generalize our previous technique to get some bounds on this number.

\subsection{Hardness Results}

\begin{theorem}
The problem of computing $\alpha^{*}(G|H)$ is $\mathcal{NP}$-Hard.
\end{theorem}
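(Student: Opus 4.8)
The plan is to reduce the general problem to the special case in which $H$ is a single vertex, and then to show that even this special case is intractable. Observe that for $H = K_1$ we have $K_1 \boxtimes W \cong W$ for every graph $W$, so $\alpha(K_1 \boxtimes W) = \alpha(W)$. Combining this with Hales' identity $\alpha^{*}(G) = \sup_{W} \alpha(G \boxtimes W)/\alpha(W)$ quoted in the preliminaries gives $\alpha^{*}(G \mid K_1) = \alpha^{*}(G)$. Hence the ordinary fractional packing number is a polynomial-time computable special case of the relative one, obtained simply by taking $H = K_1$. Consequently it suffices to prove that computing $\alpha^{*}(G)$ is $\mathcal{NP}$-hard: any polynomial-time algorithm for $\alpha^{*}(G \mid H)$ would in particular evaluate $\alpha^{*}(G) = \alpha^{*}(G \mid K_1)$.

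First I would recast $\alpha^{*}(G)$ through linear programming duality. The defining program maximizes $\sum_{v} x_{v}$ subject to $\sum_{v \in C} x_{v} \le 1$ for every clique $C$ and $x \ge 0$; since the cliques of $G$ are exactly the independent sets of $\overline{G}$, the dual asks for a minimum fractional cover of the vertices by independent sets of $\overline{G}$, so $\alpha^{*}(G) = \chi_{f}(\overline{G})$, the fractional chromatic number of the complement. This is the promised parallel with the independence number: on a vertex-transitive graph $F$ one has the classical identity $\chi_{f}(F) = |V(F)|/\alpha(F)$, so a value-preserving reduction takes an instance $F$ of maximum independent set and outputs $G = \overline{F}$ (again vertex-transitive), whence $\alpha^{*}(G) = \chi_{f}(F) = |V(F)|/\alpha(F)$ recovers $\alpha(F)$ exactly. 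Provided maximum independent set is $\mathcal{NP}$-hard on a suitable symmetric family for which this identity pins the fractional optimum to the integral one, this establishes that $\alpha^{*}(G)$ is $\mathcal{NP}$-hard. An alternative route encodes vertex weights by independent-set blow-ups $G[\overline{K_{w}}]$, for which $\alpha^{*}(G[\overline{K_{w}}]) = \max\{ w^{\top} z : z \in \mathrm{QSTAB}(G)\}$, and then appeals to the fact that optimizing over the clique-constrained stable-set polytope $\mathrm{QSTAB}(G)$ is, through the separation–optimization correspondence, equivalent to the $\mathcal{NP}$-hard maximum-weight clique problem.

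The main obstacle is exactly the passage from the fractional quantity to an $\mathcal{NP}$-hard integral one. Because $\alpha^{*}(G)$ is the value of a linear program, one cannot read $\alpha(G)$ off it directly — indeed $\alpha^{*}(G) \ge \alpha(G)$ with strict inequality already for odd cycles, where $\alpha^{*}(C_{5}) = 5/2$ while $\alpha(C_{5}) = 2$ — so the hardness cannot simply be inherited on an arbitrary class. The work is therefore to locate a class on which the LP optimum is forced to coincide with (or be a transparent function of) a genuinely hard integral invariant; the vertex-transitive identity above accomplishes this, at the cost of having to secure the $\mathcal{NP}$-hardness of independent set on that symmetric class, and I expect this to be the delicate step. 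Once $\alpha^{*}(G)$ is shown hard, the reduction of the first paragraph promotes the conclusion to $\alpha^{*}(G \mid H)$ with no further effort, since $H = K_1$ is a legitimate input to the relative problem.
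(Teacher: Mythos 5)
Your reduction runs in the opposite direction from the paper's, and the direction you chose is the expensive one. You specialize $H=K_1$, which collapses the relative quantity to $\alpha^{*}(G)$ and therefore forces you to prove that the \emph{fractional} packing number itself is $\mathcal{NP}$-hard. The paper instead specializes $G=K_1$: then $\alpha^{*}(K_1\mid H)=\sup_W \alpha(W)/\alpha(H\boxtimes W)$, and since $\alpha(H\boxtimes W)\ge\alpha(H)\,\alpha(W)$ with equality at $W=K_1$, this supremum equals $1/\alpha(H)$ exactly. That is a one-line, value-preserving (indeed approximation-preserving) reduction from maximum independent set, with no fractional relaxation ever entering the picture. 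The asymmetry matters: putting the single vertex on the $G$ side keeps the hard integral invariant $\alpha(H)$ in the answer, whereas putting it on the $H$ side replaces it by an LP optimum whose hardness is a genuinely nontrivial theorem.

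As written, your argument has a gap precisely at that crux. You correctly identify $\alpha^{*}(G)=\chi_f(\overline{G})$ and correctly note that the vertex-transitive identity $\chi_f(F)=|V(F)|/\alpha(F)$ would transfer hardness, but you leave unproved the statement that maximum independent set is $\mathcal{NP}$-hard on a vertex-transitive class --- this is true (e.g.\ for circulant graphs, by Codenotti, Gerace and Vigna), but it is a substantive external result that must be invoked explicitly, not a routine detail; without it the reduction proves nothing. Your alternative route through $\mathrm{QSTAB}(G)$ and the separation--optimization equivalence is also sound in outline but imports the full Gr\"otschel--Lov\'asz--Schrijver machinery. Either way the proof can be completed, so I would call this a repairable gap rather than a wrong approach, but you should be aware that the intended argument avoids all of this. (Also, your phrase ``polynomial-time computable special case'' is presumably a slip --- the LP has exponentially many clique constraints and $\alpha^{*}$ is exactly what you are about to argue is hard.) Finally, note that the paper's choice of direction yields Poly-APX-hardness for free, since the map $H\mapsto 1/\alpha(H)$ preserves approximation ratios; recovering that from your fractional detour would take additional work.
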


\begin{proof}
We will reduce the problem of finding the independence number of a graph to this problem, therefore showing that this problem is $\mathcal{NP}$-Hard.

    Imagine graph $G$ is a single vertex graph. In this case, we would have:
\begin{equation*}
    \alpha^{*}(G|H) = \max\limits_{W} \frac{\alpha(G \boxtimes W)}{\alpha(H \boxtimes W)} = \max\limits_{W} \frac{\alpha(W)}{\alpha(H \boxtimes W)} = \frac{1}{\inf\limits_{W} \frac{\alpha(H \boxtimes W)}{\alpha(W)}} = \frac{1}{\alpha(H)}
\end{equation*}
Where the last equality is because $\alpha(H \boxtimes W) \geq \alpha(H) \times \alpha(W)$.
\end{proof}

One can easily verify that the above reduction is, in fact, an approximation-preserving reduction. Therefore, since the problem of finding the independence number is Poly-APX-hard, the fractional packing number is also Poly-APX-hard.

\begin{remark}
The relative fractional packing number can't be approximated to a constant factor in polynomial time unless $\mathcal{P} = \mathcal{NP}$.
\end{remark}
\subsection{Relative Fractional Packing Number for Two Cycles}
In this section, we are going to calculate the fractional packing number when both graphs $G$ and $H$ are cycles. But first, we are going to mention a lemma that is proved in \cite{alipour2023relative}, which helps us in these calculations.
\begin{lemma}[Alipour and Gohari \cite{alipour2023relative}]
    \label{lem:vertrans}
    If graph $G$ is vertex-transitive with $n$ vertices, then for any arbitrary graph $H$, we have:
    \begin{equation*}
        \alpha^{*}(G|H) = \frac{n}{\alpha(G^{c} \boxtimes H)}
    \end{equation*}
\end{lemma}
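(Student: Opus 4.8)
The plan is to prove the identity by establishing the two inequalities separately, and I expect the entire difficulty to sit in the upper bound, where vertex-transitivity must genuinely be used.

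For the lower bound $\alpha^{*}(G|H) \ge n/\alpha(G^{c} \boxtimes H)$, I would simply test the defining supremum on the single graph $W = G^{c}$. Since the strong product is commutative, the denominator becomes $\alpha(H \boxtimes G^{c}) = \alpha(G^{c} \boxtimes H)$, so it remains only to check that $\alpha(G \boxtimes G^{c}) \ge n$. This is immediate: the diagonal $\{(v,v) : v \in V(G)\}$ is an independent set of $G \boxtimes G^{c}$, because any two distinct diagonal vertices $(u,u)$ and $(v,v)$ agree in neither coordinate, and for whichever of $G$, $G^{c}$ contains the non-edge $uv$ they are non-adjacent in that factor. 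Notably, this direction requires no symmetry hypothesis on $G$.

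For the upper bound it suffices to show, for every graph $W$, that $\alpha(G \boxtimes W)\cdot \alpha(G^{c} \boxtimes H) \le n\cdot \alpha(H \boxtimes W)$. I would prove this constructively: starting from a maximum independent set $S$ of $G \boxtimes W$ and a maximum independent set $T$ of $G^{c} \boxtimes H$, I build a large independent set of $H \boxtimes W$ by splicing along matched $G$-coordinates. Concretely, for an automorphism $\pi \in \mathrm{Aut}(G)$, consider the map sending a pair $s=(g_s,w_s)\in S$ and $t=(g_t,h_t)\in T$ with $g_s = \pi(g_t)$ to the vertex $(h_t, w_s) \in V(H)\times V(W)$, and let $P_\pi$ be the resulting set. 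The key structural claim is that $P_\pi$ is independent in $H \boxtimes W$, and verifying this is the main obstacle. The proof is a case analysis on whether two source pairs share a common $G$-coordinate: the non-trivial case uses that $\pi$ preserves $G$-adjacency together with the complementation relationship, so that failure of strict non-adjacency in the $W$-coordinate forces a $G$-edge between the two $g_s$, which (transported through $\pi$) becomes a $G$-edge between the two $g_t$, i.e.\ a $G^{c}$-non-edge, and this in turn forces strict non-adjacency in the $H$-coordinate by independence of $T$ in $G^{c} \boxtimes H$.

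Finally I would extract the quantitative bound by averaging over $\Gamma = \mathrm{Aut}(G)$, which is exactly where vertex-transitivity enters. For fixed $s,t$ the number of $\pi$ with $\pi(g_t)=g_s$ equals $|\Gamma|/n$ by transitivity and orbit--stabilizer, so $\sum_{\pi\in\Gamma}|\{(s,t)\in S\times T : g_s=\pi(g_t)\}| = |S|\,|T|\,|\Gamma|/n$; hence some $\pi^{*}$ produces at least $|S|\,|T|/n$ matched pairs. A short companion argument shows the splicing map is injective on these pairs: a collision $(h,w)$ would give two pairs with equal $W$-coordinate forcing $G$-non-adjacency of the $g_s$ via independence of $S$, while the equal $H$-coordinate forces $G$-adjacency of the $g_t$ via independence of $T$, and $\pi^{*}$ transports one into a contradiction with the other. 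Therefore $|P_{\pi^{*}}| \ge |S|\,|T|/n = \alpha(G\boxtimes W)\,\alpha(G^{c}\boxtimes H)/n$, so that $\alpha(H \boxtimes W) \ge \alpha(G\boxtimes W)\,\alpha(G^{c}\boxtimes H)/n$, and taking the supremum over $W$ closes the upper bound and completes the proof.
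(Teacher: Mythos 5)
The paper does not prove this lemma at all; it imports it from Alipour--Gohari \cite{alipour2023relative} and uses it as a black box, so there is no in-paper argument to compare against. Your proof is self-contained and, after one wording fix, correct. The lower bound via $W=G^{c}$ and the diagonal independent set is right and indeed needs no symmetry. The upper bound is the real content, and your automorphism-averaging construction works: $P_\pi$ is independent in $H\boxtimes W$, the splicing map is injective on matched pairs, and orbit--stabilizer plus transitivity gives some $\pi^{*}$ with at least $|S||T|/n$ matched pairs, yielding $\alpha(H\boxtimes W)\ge \alpha(G\boxtimes W)\,\alpha(G^{c}\boxtimes H)/n$ for every $W$. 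The one slip is in your description of the key case: independence of $S$ in $G\boxtimes W$ together with failure of strict non-adjacency in the $W$-coordinate forces a $G$-\emph{non}-edge between the two $g_s$ (not a $G$-edge); transported through $\pi^{-1}$ this is a $G$-non-edge between the two $g_t$, i.e.\ a $G^{c}$-\emph{edge}, and it is precisely this $G^{c}$-adjacency that lets independence of $T$ force strict non-adjacency in the $H$-coordinate. You have ``edge'' and ``non-edge'' swapped in the middle of the chain, but the two endpoints and the mechanism (transport through $\pi$ plus complementation) are exactly right, so this is a terminological error rather than a gap; the same fix applies to the injectivity argument, where $w_s=w_{s'}$ forces the $g_s$'s non-adjacent in $G$ while $h_t=h_{t'}$ forces the $g_t$'s adjacent in $G$, and $\pi$ makes these incompatible.
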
 

Since cycles are vertex-transitive graphs, we apply this lemma to calculate their relative fractional packing number.

Before stating the theorem, we are going to define the projection of an independence set to a vertex as follows:
\begin{definition}
    \label{def:proj}
    For an independence set $I$ of the graph $G \boxtimes H$ and a vertex $v \in G$, we define the projection of $I$ to $v$ as the set $\lbrace u \in H , | , (v,u) \in I \rbrace$. The definition is similar when projecting to a vertex in $H$.
\end{definition}
\begin{theorem}
    For two numbers $n, m \geq 3$, let $f(n,m)$ be the following function:
    \[ f(n,m)  = 
    \begin{cases} 
      \frac{n}{m} & \text{m is even} \\
      \frac{n}{m-1} & \text{n is even and m} \\
      \frac{n}{m} & \text{n and m are odd and n $\leq$ m} \\
      \frac{n}{m-1} & \text{n and m are odd and m $<$ n}
   \end{cases} \]
   Now, for any two cycles $C_n$ and $C_m$, we have $\alpha^{*}(C_n, C_m) = f(n,m)$.
\end{theorem}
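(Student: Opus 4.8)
The plan is to collapse the whole computation onto a single independence number using the vertex-transitivity lemma. Since $C_n$ is vertex-transitive on $n$ vertices, Lemma \ref{lem:vertrans} gives
\[
\alpha^{*}(C_n \mid C_m) = \frac{n}{\alpha(C_n^{c} \boxtimes C_m)},
\]
so proving $\alpha^{*}(C_n \mid C_m) = f(n,m)$ is equivalent to showing that $\alpha(C_n^{c} \boxtimes C_m)$ equals $m$ when $m$ is even or when $n,m$ are both odd with $n \le m$, and equals $m-1$ in the remaining cases ($m$ odd with $n$ even, or $n,m$ both odd with $m < n$). The theorem thus reduces to one clean extremal question about $\alpha(C_n^{c} \boxtimes C_m)$.

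To attack that question I would use the projection apparatus of Definition \ref{def:proj}. Fix a maximum independent set $I$ of $C_n^{c} \boxtimes C_m$ and, for each vertex $v$ of $C_m$, record the fiber $A_v = \{\, u \in V(C_n) : (u,v) \in I \,\}$, so that $|I| = \sum_{v} |A_v|$. Unwinding the strong-product adjacency together with the fact that non-adjacency in $C_n^{c}$ means equality or adjacency in $C_n$, one sees that two selected vertices can coexist in $I$ only if they differ strictly in some coordinate; in particular, whenever $v_1,v_2$ are equal or adjacent in $C_m$, the corresponding $u_1,u_2$ must be equal or adjacent in $C_n$. Translated to fibers, this says that for every edge $vv'$ of $C_m$ the set $A_v \cup A_{v'}$ is a clique of $C_n$ with $A_v \cap A_{v'} = \emptyset$; since $C_n$ is triangle-free for $n \ge 4$ its clique number is $2$, whence $|A_v| + |A_{v'}| \le 2$ along every edge of $C_m$.

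The upper bound then follows by summing $|A_v| + |A_{v'}| \le 2$ over the $m$ edges of $C_m$: each fiber is counted twice, giving $2\,|I| \le 2m$, i.e. $\alpha(C_n^{c} \boxtimes C_m) \le m$. The sharper bound $m-1$ in the odd cases is where the real work lies. If $|I| = m$ the edge inequalities are all tight, so $|A_v| + |A_{v'}| = 2$ along every edge; running this around the cycle forces $|A_v|$ to alternate between $a$ and $2-a$, and for odd $m$ the only consistent solution is $|A_v| = 1$ for all $v$. Writing $A_v = \{u(v)\}$, the edge constraints now say exactly that $u(0), u(1), \dots, u(m-1), u(0)$ is a closed walk of length $m$ in $C_n$ traversing genuine edges, and such a walk exists precisely when $m$ is even, or when $m$ is odd with $n$ odd and $n \le m$ — a short parity argument, since a closed walk of length $m$ returns to its start iff the signed step-count is a multiple of $n$ having the parity of $m$, solvable by $0$ for even $m$ and by $\pm n$ (requiring $n$ odd and $n \le m$) for odd $m$. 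Hence in the complementary odd cases no size-$m$ independent set exists and $\alpha(C_n^{c} \boxtimes C_m) \le m-1$.

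For the matching lower bounds I would exhibit explicit independent sets through the same fiber description. When a closed walk $u(0), \dots, u(m-1)$ of length $m$ exists, the set $I = \{(u(v), v) : v \in V(C_m)\}$ has size $m$ and meets every edge constraint by construction. When $m$ is odd and no such closed walk exists, I would delete one vertex $v_0$ of $C_m$ and route the remaining $m-1$ vertices along an \emph{open} walk of length $m-2$ in $C_n$ (which always exists, e.g. by oscillating along a single edge), yielding an independent set of size $m-1$. The main obstacle is precisely this tightness analysis in the odd case: pairing the alternation argument that forces unit fibers with the exact solvability criterion for closed walks in $C_n$, and then verifying that each construction genuinely satisfies the fiber conditions. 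Finally, since the clique bound $|A_v| + |A_{v'}| \le 2$ relies on $C_n$ being triangle-free, the degenerate instance $n = 3$ (where $C_3 = K_3$ has clique number $3$) and the small value $m = 3$ should be verified directly as boundary cases.
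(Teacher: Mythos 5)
Your proposal follows essentially the same route as the paper's proof: reduce via the vertex-transitivity lemma to computing $\alpha(C_n^{c} \boxtimes C_m)$, bound the fibers over $C_m$ using the fact that adjacent fibers union to a clique of $C_n$, and kill the size-$m$ case for the obstructed odd instances by the parity of a signed closed walk of length $m$ in $C_n$ --- which is exactly the paper's $\pm 1$ edge-labelling argument. The only differences are cosmetic (you handle the even cases by the same uniform walk construction instead of invoking Rosenfeld's theorem and the Perfect Graph Theorem), and your flagged boundary case $n=3$ is a genuinely sound caution: there $\omega(C_3)=3$, so $\alpha(C_3^{c})=3$ rather than $2$, and the paper's computation (and indeed the stated formula) does not survive that case unexamined.
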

\begin{proof}
    Since even cycles are perfect graphs, if the first case happens, then we have:
    \[
    \alpha^{*}(C_n, C_m) = \frac{n}{\alpha(C_n^c \boxtimes C_m)} = \frac{n}{\alpha(C_n^c) \times \alpha(C_m)}
    \]
    now $\alpha(C_n^c) = 2$ and $\alpha(C_m) = \frac{m}{2}$ therefore,
    \[
    \frac{n}{\alpha(C_n^c) \times \alpha(C_m)} = \frac{n}{m}
    \]
    
    The second case goes similarly, if $C_n$ is an even cycle, then it would be a perfect graph and by the perfect graph theorem \cite{LOVASZ197295}, $C_n^c$ will also be a perfect graph so the calculations goes as follows:
    \[
    \alpha^{*}(C_n, C_m) = \frac{n}{\alpha(C_n^c \boxtimes C_m)} = \frac{n}{\alpha(C_n^c) \times \alpha(C_m)}
    \]
    Now, this time, since $H$ is odd, $\alpha(C_m) = \frac{m-1}{2}$. Therefore,
    \[
    \frac{n}{\alpha(C_n^c) \times \alpha(C_m)} = \frac{n}{m-1}
    \]
    
    For the two latter cases, imagine that $u$ is an arbitrary vertex in $C_m$. Then, the size of the projection of any maximum independent set of $C_n^c \boxtimes C_m$ to $u$ is at most two because the largest possible independent set in $C_n^c$ has a size of two.

    Now, if for a vertex $u$, we have $|A_u| = 2$, then the set $A_u$ should contain two adjacent vertices in $C_n$. However, since two adjacent vertices in $C_n$ form a dominating set in $C_n^c$, the neighbors of $u$ in $G_m$ have empty projections. Therefore, the average projection size is less than or equal to one, meaning that $\alpha(C_n^c \boxtimes C_m) \leq m$. Additionally, it is obvious that $\alpha(C_n^c \boxtimes C_m) \geq \alpha(C_n^c) \times \alpha(C_m) \geq m-1$. So, we only have to check which of these two possibilities happens.

    When $n$ and $m$ are both odd and $n \leq m$, then we can have the following independent set of $C_n^c \boxtimes C_m$ with a size of $m$. Imagine vertices of $C_n$ are $u_1, \dots, u_n$ and vertices of $C_m$ are $v_1, \dots, v_m$ in order. Now, the claimed independent set for $C_n^c \boxtimes C_m$ is as follows:
    \begin{equation*}
        A = \lbrace (u_1, v_1), \dots, (u_n, v_n), (u_1, v_{n+1}), (u_2, v_{n+2}), (u_1, v_{n+3}), \dots, (u_2, v_{m}) \rbrace
    \end{equation*}
    You can easily verify that this is a valid independent set, which means $\alpha(C_n^c \boxtimes C_m) = m$. Therefore, $\alpha^{*}(C_n, C_m) = \frac{n}{m}$.

    For the final case, we show that it is impossible to have an independent set for $C_n^c \boxtimes C_m$ with a size of $m$.
    Suppose such an independent set $A$ exists. Based on previous reasoning, for any vertex $v \in C_m$, the size of the projection of $A$ to $v$ is exactly one. We denote this projection by the set $A_v$.
    Additionally, for any two adjacent vertices $v_i$ and $v_{i+1}$ in $C_m$, because their projections should be disconnected, if $A_{v_i}$ contains the vertex $u_j$, then $A_{v_{i+1}}$ either contains $u_{j+1}$ or $u_{j-1}$.
    
    Now, we label the edges of $C_m$ with the following rule: we label the edge between $v_i$ and $v_{i+1}$ in $C_m$ with a $+1$ if $A_{v_{i+1}}$ contains $u_{j+1}$, and otherwise, we put a $-1$ on that edge. If we take the summation of all the numbers on the edges of $C_m$, we should get $0$ because we are looping from a number back to itself in a cycle. However, this summation consists of $m$ $+1$ or $-1$ modulo $n$. Basic number theory tells us that this summation can't be equal to $0$ modulo $n$ when $m < n$ and both $n$ and $m$ are odd. This implies that $\alpha(C_n^c \boxtimes C_m) = m-1$, which means in this case $\alpha^{*}(C_n, C_m) = \frac{n}{m-1}$.
\end{proof}

\section{Necessary and Sufficient Conditions for Relative Fractional Packing Number to Be Bellow One}
\label{sec:3}
As shown in \cite{alipour2023relative}, the relative fractional packing number can serve as an upper bound on the ratio of many different graph properties. In fact, they showed the following theorem:
\begin{theorem}[Alipour and Gohari \cite{alipour2023relative}]
    Let $X(G)$ be any of the following graph properties: independence number, zero-error Shannon capacity, fractional packing number, or Lovász number of a graph $G$. Then we have the following:
\[
\frac{X(G)}{X(H)} \leq \alpha^{*}(G|H)
\]
\end{theorem}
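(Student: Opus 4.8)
The unifying tool in every case will be the single inequality that defines $c := \alpha^{*}(G|H)$, namely
\[
\alpha(G \boxtimes W) \leq c \cdot \alpha(H \boxtimes W) \quad \text{for every graph } W,
\]
which just records that $c$ is the supremum of $\alpha(G \boxtimes W)/\alpha(H \boxtimes W)$. My plan is to treat each of the four invariants by first writing it in a form assembled out of the quantities $\alpha(\,\cdot \boxtimes W\,)$ and then feeding in this inequality. The independence number is immediate: taking $W = K_{1}$, the one-vertex graph, which is the identity for $\boxtimes$, gives $\alpha(G) \leq c\,\alpha(H)$, that is $\alpha(G)/\alpha(H) \leq \alpha^{*}(G|H)$.

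For the Shannon capacity I would first prove the \emph{product lemma}: for every $W$ and every $n \geq 1$,
\[
\alpha(G^{\boxtimes n} \boxtimes W) \leq c^{n}\,\alpha(H^{\boxtimes n} \boxtimes W),
\]
by induction on $n$. The base case is the defining inequality. For the step I would write $G^{\boxtimes (n+1)} \boxtimes W = G \boxtimes (G^{\boxtimes n} \boxtimes W)$, apply the base case with auxiliary graph $G^{\boxtimes n} \boxtimes W$, rearrange the product as $G^{\boxtimes n} \boxtimes (H \boxtimes W)$, and apply the induction hypothesis with auxiliary graph $H \boxtimes W$; commutativity and associativity of $\boxtimes$ make the bookkeeping routine. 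Setting $W = K_{1}$ yields $\alpha(G^{\boxtimes n}) \leq c^{n}\,\alpha(H^{\boxtimes n})$, and taking $n$-th roots and letting $n \to \infty$ (the limits exist by Fekete's lemma, which is exactly how $\theta$ was defined) gives $\theta(G) \leq c\,\theta(H)$.

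For the fractional packing number I would exploit that Hales' supremum is attained: let $W_{G}$ be a graph with $\alpha^{*}(G) = \alpha(G \boxtimes W_{G})/\alpha(W_{G})$. Since $\alpha^{*}(H)$ is the maximum over all graphs of $\alpha(H \boxtimes W)/\alpha(W)$, in particular $\alpha^{*}(H) \geq \alpha(H \boxtimes W_{G})/\alpha(W_{G})$, and therefore
\[
\frac{\alpha^{*}(G)}{\alpha^{*}(H)} \leq \frac{\alpha(G \boxtimes W_{G})/\alpha(W_{G})}{\alpha(H \boxtimes W_{G})/\alpha(W_{G})} = \frac{\alpha(G \boxtimes W_{G})}{\alpha(H \boxtimes W_{G})} \leq \alpha^{*}(G|H).
\]
This same pattern can be phrased abstractly: if $X$ is multiplicative under $\boxtimes$, satisfies $X \geq \alpha$, and admits the characterization $X(G) = \sup_{W} \alpha(G \boxtimes W)/X(W)$, then replacing $\alpha(H \boxtimes W)$ by the larger $X(H \boxtimes W) = X(H)X(W)$ in the defining inequality, dividing by $X(W)$, and taking the supremum over $W$ gives $X(G) \leq c\,X(H)$ at once.

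The Lovász number is the case I expect to be the real obstacle, and I would attack it through exactly this template. The two standard ingredients are available: $\vartheta$ is multiplicative under the strong product, $\vartheta(A \boxtimes B) = \vartheta(A)\vartheta(B)$, and the sandwich $\alpha(A) \leq \vartheta(A)$ holds. Granting the characterization $\vartheta(G) = \sup_{W} \alpha(G \boxtimes W)/\vartheta(W)$, the conclusion is a one-liner: for every $W$ we have $\alpha(G \boxtimes W) \leq c\,\alpha(H \boxtimes W) \leq c\,\vartheta(H \boxtimes W) = c\,\vartheta(H)\,\vartheta(W)$, so dividing by $\vartheta(W)$ and taking the supremum over $W$ gives $\vartheta(G) \leq c\,\vartheta(H)$. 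The crux is therefore to establish that characterization, and specifically its nontrivial direction $\vartheta(G) \leq \sup_{W}\alpha(G \boxtimes W)/\vartheta(W)$, i.e. the existence of witness graphs $W$ whose ratio $\alpha(G \boxtimes W)/\vartheta(W)$ approaches $\vartheta(G)$ (the reverse inequality is immediate from multiplicativity and the sandwich). The check $W = C_{5}$ for $G = C_{5}$, where $\alpha(C_{5} \boxtimes C_{5}) = 5 = \vartheta(C_{5})^{2}$, shows the target ratio can be met; but because the Shannon capacity and the Lovász number differ on general graphs, powers of $G$ itself cannot serve as witnesses, and producing an adequate $W$ in general is where I expect the main difficulty to lie.
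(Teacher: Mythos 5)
This theorem is quoted from Alipour and Gohari and the paper gives no proof of it, so there is no in-paper argument to compare against; I can only assess your proposal on its own terms. Your treatment of three of the four invariants is correct and complete: the independence number via $W=K_1$, the Shannon capacity via the inductive product lemma $\alpha(G^{\boxtimes n}\boxtimes W)\leq c^{\,n}\alpha(H^{\boxtimes n}\boxtimes W)$ followed by $n$-th roots, and the fractional packing number via Hales' maximizer $W_G$ together with $\alpha^{*}(H)\geq \alpha(H\boxtimes W_G)/\alpha(W_G)$. These are clean, self-contained arguments.

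The Lov\'asz number case, however, contains a genuine gap, and you correctly identify it yourself: your reduction rests entirely on the characterization $\vartheta(G)=\sup_W \alpha(G\boxtimes W)/\vartheta(W)$, and you prove only the easy direction $\sup_W \alpha(G\boxtimes W)/\vartheta(W)\leq \vartheta(G)$ (from $\alpha\leq\vartheta$ and multiplicativity). The nontrivial direction is not a routine fact, and it is where all of the content of the $\vartheta$ case lives. To see concretely why it is delicate: for vertex-transitive $G$ the witness $W=G^{c}$ works, because $\alpha(G\boxtimes G^{c})\geq n_G$ while Lov\'asz's identity $\vartheta(G)\vartheta(G^{c})=n_G$ gives $\alpha(G\boxtimes G^{c})/\vartheta(G^{c})\geq n_G/\vartheta(G^{c})=\vartheta(G)$; but for general $G$ one only has the inequality $\vartheta(G)\vartheta(G^{c})\geq n_G$, so $n_G/\vartheta(G^{c})$ may fall strictly below $\vartheta(G)$ and the same witness fails. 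Powers of $G$ fail too, as you note, since $\theta(G)$ can be strictly less than $\vartheta(G)$. So as written the proposal establishes the theorem for $X=\alpha$, $X=\theta$, and $X=\alpha^{*}$, but not for $X=\vartheta$; to close that case you would either need to prove the missing characterization or find a different route (for instance through a dual/covering formulation of $\alpha^{*}(G|H)$ combined with the additivity of $\vartheta$ over disjoint unions and its monotonicity, which is closer in spirit to how such bounds are usually derived in the source paper).
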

The above theorem mentions an important usage for the relative fractional packing number. If we find a necessary and sufficient condition on graphs $G$ and $H$ such that $\alpha^{*}(G|H) \leq 1$, then according to the theorem, we have also found a sufficient condition for $X(G) \leq X(H)$ for all choices of $X$.

In this section, we propose a conjecture for such a condition and provide proofs for its correctness for some famous families of graphs.
\begin{definition}
    \label{def:expand}
    For a graph $G$, we define $Expand(G)$ as the set of all graphs that can be obtained by a sequence of the following three operations on $G$ iteratively.
    \begin{enumerate}
        \item Remove a vertex $v$ from $G$.
        \item add a new edge to $G$.
        \item replace a vertex $v$ by a clique of a arbitrarily size.
    \end{enumerate}
    The latter operation just puts a clique of size $k$ instead of vertex $v$ and connects these $k$ new vertices to all neighbors of $v$.
    
\begin{figure}[hbt!]
  \centering
  \subfloat{\includegraphics[width=0.4\textwidth]{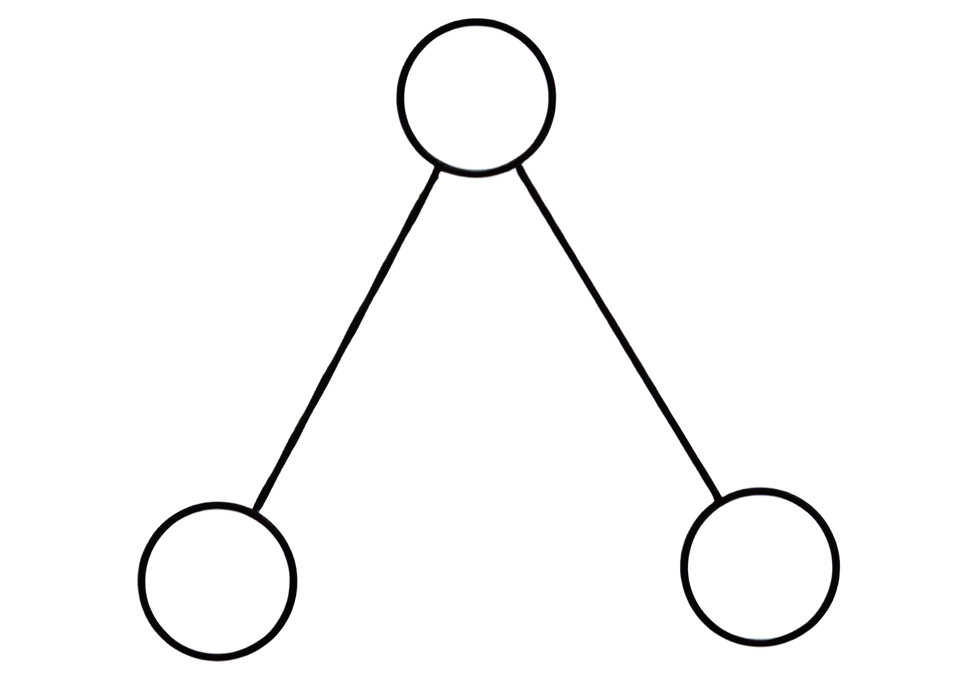}}
  \subfloat{\includegraphics[width=0.4\textwidth]{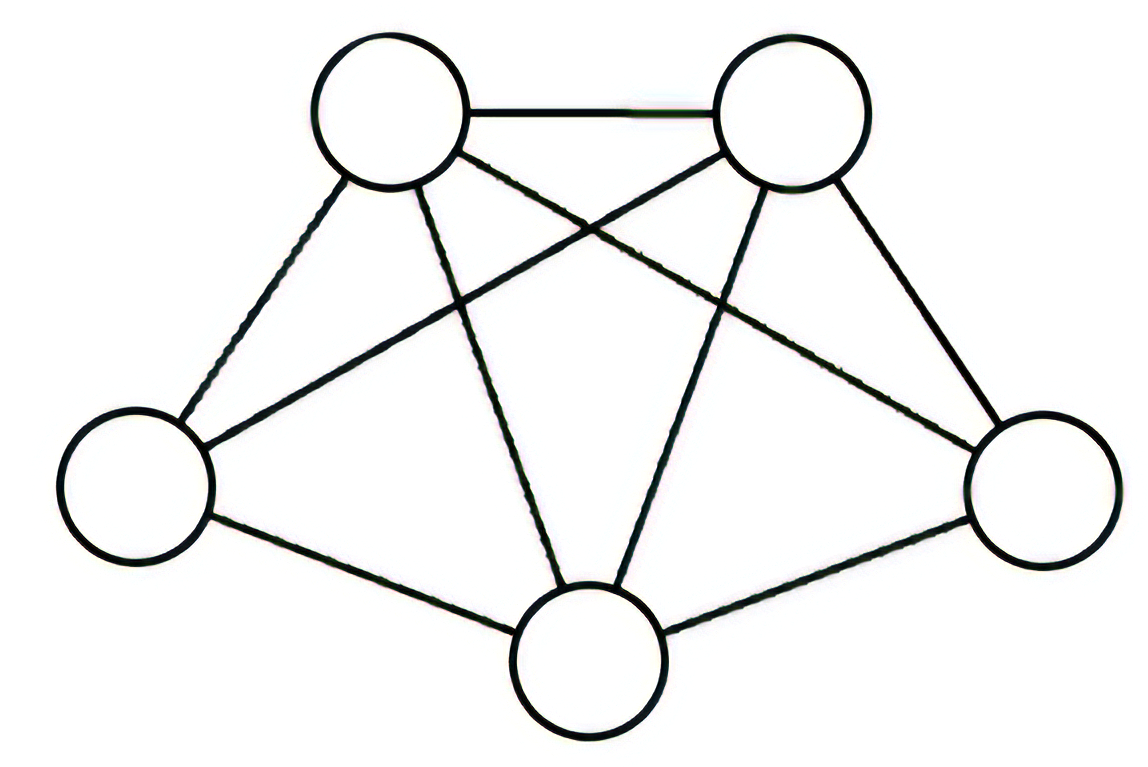}}
  \caption{The graph $P_3$ before and after replacing the middle vertex with a clique of size $3$ using the third operation.}
\end{figure}
    
    You can trivially verify that all the above operations, when applied to a graph $G$, will not increase $\alpha(G \boxtimes W)$ for any choice of graph $W$. Therefore, if we have two graphs $G$ and $H$ such that $G \in Expand(H)$, then we can conclude that $\alpha(G \boxtimes W) \leq \alpha(H \boxtimes W)$ for any graph $W$. This implies that $\alpha^{*}(G|H) \leq 1$.
\end{definition}
\begin{remark}
\label{mrk:merge}
    Note that by the above operations, one can merge (contract) any two vertices $v$ and $u$ in $G$. To achieve this, we only need to delete one of the vertices, for instance $v$, by applying the first operation. Then, we add new edges between $u$ and the neighbors of $v$ by applying the second operation. This allows us to merge the two vertices $v$ and $u$ into a single vertex.
\end{remark}
\begin{theorem}
    \label{th:triv}
    From the definition of expand, it is trivial to observe that if $G \in Expand(H)$, then $\alpha^{*}(G|H) \leq 1$.
\end{theorem}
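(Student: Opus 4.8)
The plan is to reduce the statement to a single monotonicity fact about the three expand operations and then chain it along the sequence witnessing $G \in Expand(H)$. Since $G \in Expand(H)$ means that $G$ arises from $H$ by finitely many applications of vertex deletion, edge addition, and vertex-to-clique replacement, it suffices to show that one application of any of these operations, turning a graph $A$ into $A'$, satisfies $\alpha(A' \boxtimes W) \le \alpha(A \boxtimes W)$ for every graph $W$. Granting this, a straightforward induction on the number of operations gives $\alpha(G \boxtimes W) \le \alpha(H \boxtimes W)$ for all $W$, whence $\alpha^{*}(G|H) = \sup_W \alpha(G \boxtimes W)/\alpha(H \boxtimes W) \le 1$.

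Two of the three operations I expect to dispatch immediately via the induced-subgraph viewpoint. If $A'$ deletes a vertex, then $A' \boxtimes W$ is an induced subgraph of $A \boxtimes W$, so every independent set of the former is one of the latter and $\alpha$ cannot increase. If $A'$ adds an edge, then $A \boxtimes W$ and $A' \boxtimes W$ share a vertex set while $E(A \boxtimes W) \subseteq E(A' \boxtimes W)$, because the new edge only creates additional product-adjacencies; hence every independent set of $A' \boxtimes W$ is independent in $A \boxtimes W$, and again $\alpha$ does not increase.

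The main obstacle is the clique-replacement operation, where $A'$ replaces a vertex $v$ by a clique $v_1,\dots,v_k$ joined to all former neighbors of $v$; here $A'$ is not a subgraph of $A$, so a direct comparison fails and I would instead use a projection-and-collapse map in the spirit of Definition~\ref{def:proj}. Starting from a maximum independent set $I$ of $A' \boxtimes W$, set $P_i = \{\, w : (v_i,w) \in I \,\}$ for each clique copy. Because $v_1,\dots,v_k$ form a clique, the strong-product adjacency rules force the $P_i$ to be pairwise disjoint and their union $\bigcup_i P_i$ to be independent in $W$; in particular no two clique copies carry the same $W$-coordinate. I would then collapse the clique back to $v$, sending each $(v_i,w) \in I$ to $(v,w)$ and leaving all other vertices of $I$ fixed; disjointness makes this injective, so the image $I'$ satisfies $|I'| = |I|$. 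The crux is checking that $I'$ is independent in $A \boxtimes W$: within the collapsed block this follows since $\bigcup_i P_i$ is independent in $W$, while across the block and an unchanged vertex $(a,w')$ with $a \ne v$, one uses that $v_i$ is adjacent to $a$ in $A'$ exactly when $v$ is adjacent to $a$ in $A$, so the product-adjacency of $(v_i,w)$ and $(a,w')$ in $A' \boxtimes W$ coincides with that of $(v,w)$ and $(a,w')$ in $A \boxtimes W$, and non-edges are preserved. This yields $\alpha(A \boxtimes W) \ge |I'| = |I| = \alpha(A' \boxtimes W)$, completing the hard one-step case and hence the theorem. The only genuinely delicate point is this adjacency bookkeeping for the clique replacement; the deletion and edge-addition cases are indeed routine, which is presumably why the statement is labelled trivial.
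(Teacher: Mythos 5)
Your proposal is correct and follows essentially the same route as the paper, which simply asserts in Definition~\ref{def:expand} that each of the three operations cannot increase $\alpha(\cdot \boxtimes W)$ and then chains this along the sequence of operations. Your projection-and-collapse verification for the clique-replacement step is a valid filling-in of the detail the paper leaves as ``trivial to verify.''
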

It's time to mention the main conjecture.
\begin{conjecture}
    \label{conj:below1}
    $\alpha^{*}(G|H) \leq 1$ if and only if $G \in expand(H)$.
\end{conjecture}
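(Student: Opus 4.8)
The plan is to first replace the combinatorial object $\mathrm{Expand}$ by a cleaner algebraic one. The easy direction of the conjecture is already Theorem~\ref{th:triv}, so all the content is in the converse, that $\alpha^{*}(G|H)\le 1$ forces $G\in \mathrm{Expand}(H)$. As a first step I would prove the translation lemma that $G\in \mathrm{Expand}(H)$ if and only if there is a graph homomorphism $G^{c}\to H^{c}$: given such a $\phi$, each fiber $\phi^{-1}(w)$ is a clique of $G$ (its vertices are non-adjacent in $G^{c}$, hence adjacent in $G$), adjacent fibers of $H$ are completely joined in $G$, any further edges of $G$ are free, and unused vertices of $H$ are deleted — which is exactly a clique substitution, followed by edge additions and vertex deletions; the converse follows by an induction showing each of the three operations preserves the existence of a homomorphism onto $H^{c}$. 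With this lemma the conjecture becomes
\[
\alpha^{*}(G|H)\le 1 \quad\Longleftrightarrow\quad \text{there is a homomorphism } G^{c}\to H^{c}.
\]
The $\Leftarrow$ implication also re-proves the easy direction directly: given $\phi$, one sends an independent set $I$ of $G\boxtimes W$ to one of $H\boxtimes W$ of the same size by merging, along each fiber of $\phi$, the projections of $I$ (Definition~\ref{def:proj}); this is legal because a fiber is a $G$-clique, so its projections are pairwise disconnected, and $H$-adjacent fibers pull back to $G$-adjacent vertices, keeping the merged projections disconnected.

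For the hard direction I would argue by contrapositive and produce a witness from the single natural candidate $W=G^{c}$. The diagonal $\{(v,v)\}$ is independent, so $\alpha(G\boxtimes G^{c})\ge |V(G)|=n$, and the hypothesis $\alpha^{*}(G|H)\le 1$ then gives $\alpha(H\boxtimes G^{c})\ge n$. Projecting a maximum independent set of $H\boxtimes G^{c}$ onto the $G^{c}$-coordinates assigns to each $v\in V(G)$ an independent set $I_{v}\subseteq V(H)$ such that $v\not\sim_{G}v'$ forces $I_{v},I_{v'}$ disconnected in $H$. If every $I_{v}$ is nonempty, then choosing one $\phi(v)\in I_{v}$ is precisely a homomorphism $G^{c}\to H^{c}$, and we are done. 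Thus everything reduces to a clean question: can the bound $\alpha(H\boxtimes G^{c})\ge n$ always be realized by an independent set meeting every fiber? When $G$ is vertex-transitive — cycles in particular — Lemma~\ref{lem:vertrans} turns the hypothesis into $\alpha(G^{c}\boxtimes H)\ge n$, and the symmetry lets one rotate or average the witness so that all fibers are simultaneously covered; this is the route I would take to recover the cycle and vertex-transitive cases. For perfect graphs I would instead use that $G^{c}$ is perfect by the Perfect Graph Theorem \cite{LOVASZ197295}, so that the clique-constrained linear program underlying $\alpha^{*}$ is integral and the fractional certificate rounds to an honest $\phi$.

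The one hard point, and the reason this remains a conjecture, is exactly this selection step: the hypothesis guarantees only that the total projection size is at least $n$, i.e. that the fibers are nonempty \emph{on average}, whereas a homomorphism needs every fiber nonempty \emph{simultaneously}. Closing this gap in general is the crux. I would attempt it by amplification: since $\alpha^{*}(G|H)\le 1$ persists for every test graph, replacing $W=G^{c}$ by a strong power $(G^{c})^{\boxtimes k}$ or by a balanced blow-up may convert the averaged guarantee into a per-vertex one, or — read contrapositively — manufacture a strict inequality $\alpha(G\boxtimes W)>\alpha(H\boxtimes W)$ out of the failure of every candidate selection to be a homomorphism. The real danger is that this is an integrality gap between fractional and integral homomorphisms $G^{c}\to H^{c}$; if such a gap could survive the ``for all $W$'' quantifier it would refute the conjecture, so the decisive and most difficult task is to show that the strong-product structure forbids it.
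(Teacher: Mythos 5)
This statement is a conjecture, and the paper does not prove it; it only establishes the easy direction (Theorem \ref{th:triv}) and the converse for two special families (cycles, Theorem \ref{th:cycle}, and perfect graphs, Theorem \ref{th:perfect}). Your proposal, to its credit, is honest about this and does not claim a complete proof. What you add that the paper does not have is the translation lemma: $G \in \mathrm{Expand}(H)$ if and only if there is a graph homomorphism $G^{c} \to H^{c}$. I checked this and it is correct in both directions (fibers of $\phi$ are cliques of $G$, fibers over $H$-adjacent vertices are completely joined in $G$, so the blow-up is a spanning subgraph of $G$; conversely each of the three operations composes with a homomorphism onto $H^{c}$). This is a genuinely useful normal form — it shows every element of $\mathrm{Expand}(H)$ arises as ``delete, blow up, add edges'' — and your homomorphism-based re-proof of the easy direction is also sound. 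Your reduction of the hard direction to the statement ``the independent set of $H \boxtimes G^{c}$ of size $\ge n_G$ can be chosen with every fiber $I_v$ nonempty'' is exactly the paper's Case 1 of Theorem \ref{th:cycle} in disguise; the paper itself observes that Case 1 uses nothing about cycles.

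The gap is the one you name yourself: the hypothesis only gives $\sum_v |I_v| \ge n_G$, not $|I_v| \ge 1$ for all $v$, and none of your proposed remedies closes it. Two concrete warnings. First, your suggestion that vertex-transitivity lets one ``rotate or average the witness so that all fibers are simultaneously covered'' is unsubstantiated: averaging over the automorphism group produces a fractional object, not a single independent set meeting every fiber, and the paper explicitly lists the vertex-transitive case as an \emph{open problem} rather than a consequence of symmetry. Indeed the paper's own proof for cycles does not rule out the empty-fiber case — its Case 2 accepts that a fiber may be empty and then builds $G$ from $H$ by an entirely different construction (a union of $\lfloor n_G/2 \rfloor$ disjoint edges drawn from a single independent set $S$ of the cycle), which has no analogue in your homomorphism picture. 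Second, for perfect graphs the paper does not ``round a fractional certificate''; it uses Rosenfeld's product theorem to split the maximum independent set of $H \boxtimes G^{c}$ as a Cartesian product and then invokes a clique partition of $G$ into $\alpha(G)$ cliques. So your proposal correctly isolates the crux but proves strictly less than the paper does: it recovers neither the cycle case nor the perfect case as written, and the general statement remains open.
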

Now, we will validate this conjecture when $G$ belongs to certain specific classes of graphs.
\\
Firstly, we examine the case where $G$ is a cycle.
\begin{theorem}
    \label{th:cycle}
    If $G$ is a cycle and $H$ is an arbitrary graph, then $\alpha^{*}(G|H) \leq 1$ if and only if $G \in Expand(H)$.
\end{theorem}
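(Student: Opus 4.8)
The backward implication is already Theorem~\ref{th:triv}, so the entire task is the forward direction: assuming $\alpha^{*}(C_n|H)\le 1$, exhibit a sequence of the operations of Definition~\ref{def:expand} that turns $H$ into $C_n$. The plan is to first exploit vertex-transitivity. Since $C_n$ is vertex-transitive, Lemma~\ref{lem:vertrans} gives $\alpha^{*}(C_n|H)=n/\alpha(C_n^{c}\boxtimes H)$, so the hypothesis is equivalent to $\alpha(C_n^{c}\boxtimes H)\ge n$. I would then fix a maximum independent set $A$ of $C_n^{c}\boxtimes H$ and study its projections (Definition~\ref{def:proj}): for each cycle-vertex $u_i$ let $A_i\subseteq V(H)$ be the projection of $A$ to $u_i$. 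Reading off the strong-product adjacencies shows that each $A_i$ is independent in $H$, that for non-consecutive $i,j$ (the pairs adjacent in $C_n^{c}$) the sets $A_i$ and $A_j$ are disconnected, that consecutive slots impose no constraint, and that $\sum_i|A_i|=|A|\ge n$.

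The construction of the $Expand$ sequence is clean once I produce a \emph{transversal}: a single vertex $\phi(i)\in V(H)$ for every slot $i$ such that for non-consecutive $i,j$ the vertices $\phi(i)$ and $\phi(j)$ are distinct and non-adjacent. Given such a $\phi$, I would delete every vertex of $H$ outside $\phi(\{1,\dots,n\})$ (operation~1); for each image $t$ that is hit by $k\ge 2$ slots, which must be consecutive slots since non-consecutive images are distinct, I replace $t$ by a clique of size $k$ (operation~3) and assign one copy to each of those slots; finally I add, by operation~2, any still-missing edge between consecutive slot-vertices. Because blow-up copies of a common vertex are mutually adjacent while copies of distinct $t,t'$ are adjacent exactly when $t\sim_H t'$, the non-consecutive slot-vertices remain non-adjacent throughout while all consecutive pairs become adjacent, so the resulting graph is exactly $C_n$ and hence $C_n\in Expand(H)$.

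The crux, and the step I expect to be the main obstacle, is manufacturing the transversal, i.e.\ upgrading the mere inequality $\sum_i|A_i|\ge n$ into an independent set whose $n$ slot-projections are all nonempty; a single vertex chosen from each slot is then automatically a valid transversal by the disconnectedness of non-consecutive projections. This is not automatic: a maximum independent set can concentrate its mass on a few slots and leave others empty, as already happens for $C_4$ with $H=\overline{K_2}$, so a genuine rebalancing is needed. I would phrase it on the cycle $\mathbb{Z}_n$ using the dual projections $B_h=\{\,i:(u_i,h)\in A\,\}$, each of which is forced to be a clique of $C_n$ (a slot or a pair of consecutive slots), with $h\sim_H h'$ forcing the arcs $B_h,B_{h'}$ to be close; the total arc-length $\sum_h|B_h|\ge n$ must then be redistributed so that the arcs cover all $n$ slots. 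The delicate point is the cyclic consistency of this redistribution: shifting coverage from a doubly-covered slot toward an empty one can break disconnectedness with the slots that were previously consecutive and are now non-consecutive. I expect to settle this exactly as in the two-cycle theorem, by a winding/parity argument around $\mathbb{Z}_n$ that certifies a globally consistent reassignment exists; for even $n$ one may alternatively invoke the perfectness of $C_n$ to bypass this step.
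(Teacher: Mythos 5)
Your backward direction is correctly delegated to Theorem~\ref{th:triv}, and your handling of the case in which every slot projection $A_i$ is nonempty is sound; it is essentially the paper's Case~1, where the $H$-side projections $A_v$ are blown up into labelled cliques and equal labels are merged via Remark~\ref{mrk:merge} --- your transversal is just the special case where one element is picked from each nonempty $A_i$. The problem is the step you yourself identify as the crux: when some $A_i$ is empty you never actually produce the transversal. Announcing that you expect a winding/parity argument around $\mathbb{Z}_n$ to certify a globally consistent reassignment is not a proof, and it is not even established that the object you want exists: nothing in the hypothesis $\alpha(C_n^{c}\boxtimes H)\ge n$ is shown to guarantee an independent set of $C_n^{c}\boxtimes H$ all of whose $n$ projections are nonempty, and the paper never proves such a rebalancing is possible. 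As written, the forward direction is therefore established only under an additional unjustified assumption, which is a genuine gap (your parenthetical remark about even $n$ and perfectness does rescue that subcase via Theorem~\ref{th:perfectrelative}, but not odd $n$).

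The paper closes the empty-slot case by a different and more elementary device, with no rebalancing at all. Suppose $A_{u_1}=\emptyset$, and let $S=\{u_2,u_4,\dots\}$ and $T=\{u_3,u_5,\dots\}$ be the two alternating independent sets of the cycle avoiding $u_1$; since $\sum_{u}|A_u|\ge n$, by symmetry one may assume $\sum_{u\in S}|A_u|\ge\lceil n/2\rceil$. Each $H$-side projection $A_v$ is a clique of $C_n$ (at most an edge), hence meets the independent set $S$ in at most one vertex, so at least $\lceil n/2\rceil$ vertices $v$ of $H$ have $A_v\cap S\neq\emptyset$; and because $S$ is a clique of $C_n^{c}$, these vertices are pairwise nonadjacent in $H$. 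Deleting everything else and replacing $\lfloor n/2\rfloor$ of them by cliques of size two produces $\lfloor n/2\rfloor$ disjoint edges (plus one isolated vertex when $n$ is odd), i.e.\ a spanning subgraph of $C_n$, and adding the missing edges completes the construction. In short: the empty slot is exploited to concentrate mass on one alternating class of the cycle rather than repaired, and this is the argument your proposal is missing.
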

\begin{proof}
    According to Theorem \ref{th:triv}, one direction of the theorem is true. We will now assume $\alpha^{*}(G|H) \leq 1$ and prove that $G \in \text{Expand}(H)$.

Since $G$ is a cycle, it is also vertex-transitive. Therefore, by Lemma \ref{lem:vertrans}, we deduce:
\[
\alpha^{*}(G|H) = \frac{n_G}{\alpha(G^{c} \boxtimes H)}
\]
So $\alpha^{*}(G|H) \leq 1$ means that $n_G \leq \alpha(G^{c} \boxtimes H)$.

Recall the definition of projection from Definition \ref{def:proj}. For any vertex $u$ in $G$ or $H$, we name the projection from the maximum independent set of $G^{c} \boxtimes H$ to $u$ as $A_u$. Now we have:
\[
\alpha(G^{c} \boxtimes H) = \sum\limits_{u \in G} |A_u| \geq n_G
\]

We split the rest of the proof into two cases.
\\
    \textbf{Case 1  $\forall u \in G \, \, |A_u| \geq 1$} :
    
    For any vertex $v$ of $H$, $A_v$ is an independent set in $G^c$. Therefore, it forms a clique in $G$. Since $G$ is a cycle, this clique can be either empty, a single vertex, or a single edge. To construct $G$ from $H$ by applying the expansion operations, we first replace every vertex $v$ in $H$ with a clique of size $|A_v|$ to obtain $H'$. We can imagine the vertices of these cliques labeled by the members of $A_v$. If $A_v$ is empty, we simply delete the vertex $v$ from $H$.
    
    If two vertices in $H'$ labeled by $u_i$ and $u_j$ are connected by an edge, then either for a vertex $v$ in $H$ we have $u_i, u_j \in A_v$, or there exist two connected vertices $v_i, v_j \in H$ such that $u_i \in A_{v_i}$ and $u_j \in A_{v_j}$. Since these projections were from an independent set of $G^{c} \boxtimes H$, it follows that in both cases, $u_i$ must be disconnected from $u_j$ in $G^{c}$. Consequently, there is an edge between them in $G$.

    By the above observation, we can deduce that if there is an edge between two vertices labeled by $u_i$ and $u_j$ in $H'$, then there is an edge between $u_i$ and $u_j$ in $G$.
    
    Finally, using Remark \ref{mrk:merge}, we can merge all the vertices in $H'$ that have the same label to obtain $H''$. By the assumption of this case and the statement above, $H''$ is a spanning subgraph of $G$. Now, by adding the missing edges of $G$ to $H''$, we can finish the construction of $G$. Thus, the theorem is proved for this case.
    \\
    \textbf{Case 2 $\exists u \in G \, \, |A_u| = 0$} :
    Imagine such a vertex $u$ exists. Let's order the vertices of $G$ clockwise from $u_1$ to $u_{n_G}$, where we set $u = u_1$. Now, consider the following independent sets in $G$:
    \begin{equation*}
        S = \lbrace u_2, u_4, \dots, u_{n_G -1} \rbrace \quad T = \lbrace u_3, u_5, \dots, u_{n_G} \rbrace
    \end{equation*}
    Since $\sum\limits_{u \in G} |A_u| \geq n_G$ and by symmetry, we can assume $\sum\limits_{u \in S} |A_u| \geq \ \lceil \frac{n_G}{2} \rceil$. Now, similar to the previous case, we try to construct $G$ from $H$ by applying the expansion operations.
    
    Firstly, delete every vertex $v$ from $H$ such that $A_v \cap S = \emptyset$ to obtain $H'$. This is because $S$ was an independent set in $G$, so it forms a clique in $G^c$. Since the projections are coming from an independent set $G^c \boxtimes H$, all the remaining vertices in $H'$ must be disconnected from each other, and thus $H'$ is an independent set.
    
     Due to our assumption about the set $S$, $H'$ has more vertices than $\lceil \frac{n_G}{2} \rceil$. To obtain $H''$, we replace $\lfloor \frac{n_G}{2} \rfloor$ of these vertices with cliques of size two (edges). As a result, $H''$ consists of $\lfloor \frac{n_G}{2} \rfloor$ disjoint edges (and one single vertex if $n_G$ is odd). Hence, $H''$ is a spanning subgraph of $G$.

     Now, by adding the missing edges of $H''$ to it, we complete the construction of $G$. Thus, the theorem is also proved for this case.
    \end{proof}

    It is not hard to see that the proof in \textbf{Case 1} was completely independent from the fact that $G$ is a cycle. Therefore, if one can prove that the second case won't happen for a certain family of graphs, they have indeed proved the correctness of Conjecture \ref{conj:below1} for that certain family of graphs.
     
     Finally, we are going to investigate the correctness of Conjecture \ref{conj:below1} when $G$ is a perfect graph. We will apply a similar technique of using the projections to construct $G$ by applying expansion operations to $H$.

     But first, we are going to mention a couple of well-known theorems regarding perfect graphs.
    \begin{theorem}[Rosenfeld \cite{rosenfeld1967problem}]
        If $G$ is a perfect graph, then for any arbitrary graph $H$ we have:
        \begin{equation*}
            \alpha(G \boxtimes H) = \alpha(G) \times \alpha(H)
        \end{equation*}
    \end{theorem}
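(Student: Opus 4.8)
The plan is to sandwich $\alpha(G \boxtimes H)$ between the product $\alpha(G)\alpha(H)$ from below and a clique-cover quantity from above, and then use perfection to force the two bounds to coincide. The lower bound $\alpha(G \boxtimes H) \geq \alpha(G)\alpha(H)$ holds for arbitrary graphs and is already the inequality invoked in the cycle computations: if $I$ is a maximum independent set in $G$ and $J$ one in $H$, then $I \times J$ is independent in $G \boxtimes H$. Indeed, for two distinct vertices $(u_1,v_1),(u_2,v_2)$ of $I\times J$, if $u_1\neq u_2$ then $u_1u_2\notin E(G)$ rules out every adjacency alternative that needs an edge in $G$, and if $u_1=u_2$ then $v_1\neq v_2$ with $v_1v_2\notin E(H)$ rules out the remaining one; by the definition of the strong product the two vertices are non-adjacent.

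The substance of the argument is the upper bound $\alpha(G \boxtimes H) \leq \bar{\chi}(G)\cdot\alpha(H)$, where $\bar{\chi}(G)$ is the clique cover number of $G$, i.e. the least number of cliques needed to cover $V(G)$ (equivalently $\chi(G^c)$). I would fix a minimum clique cover $Q_1,\dots,Q_k$ of $G$ with $k=\bar{\chi}(G)$, take a maximum independent set $I$ of $G\boxtimes H$, and bound each piece $I \cap (Q_i \times V(H))$ separately. The key claim is that the projection (Definition \ref{def:proj}) of such a piece to $H$ is an injection onto an independent set of $H$, whence each piece has at most $\alpha(H)$ vertices. To see this, take two distinct members $(u_1,v_1),(u_2,v_2)$ of the piece; since $u_1,u_2$ lie in the common clique $Q_i$, either $u_1=u_2$ or $u_1u_2\in E(G)$. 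In the first case the strong-product adjacency reduces to the condition $v_1v_2\in E(H)$, and in the second it reduces to $v_1v_2\in E(H)$ or $v_1=v_2$; in either case, independence of $I$ forces the $H$-coordinates to be both distinct and non-adjacent, since otherwise the two points would be adjacent in $G\boxtimes H$. Summing over the $k$ cliques yields $|I| = \sum_i |I \cap (Q_i\times V(H))| \leq k\,\alpha(H)$.

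Finally I would close the gap using perfection. By the Perfect Graph Theorem \cite{LOVASZ197295}, the complement $G^c$ is perfect whenever $G$ is, so $\bar{\chi}(G)=\chi(G^c)=\omega(G^c)=\alpha(G)$, the middle equality being the defining property of the perfect graph $G^c$ and the last one holding because independent sets of $G$ are exactly cliques of $G^c$. Substituting into the upper bound gives $\alpha(G\boxtimes H)\leq\alpha(G)\alpha(H)$, which together with the lower bound establishes equality.

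The only genuinely non-trivial step is the clique-cover upper bound, and within it the projection-injectivity claim; everything else is bookkeeping. It is worth emphasizing that the role of perfection is concentrated entirely in the last paragraph: it is precisely what converts the clique cover number $\bar{\chi}(G)$ into the independence number $\alpha(G)$, so that the general upper bound becomes tight. For a non-perfect $G$ the same method still yields the bound $\alpha(G\boxtimes H)\leq\bar{\chi}(G)\,\alpha(H)$, which is where the main obstacle would lie if one tried to push the statement beyond the perfect case.
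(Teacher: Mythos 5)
Your argument is correct. Note that the paper does not prove this statement at all—it is quoted as a known result of Rosenfeld with a citation—so there is no in-paper proof to compare against; what you have written is essentially the standard textbook argument (lower bound via the product of maximum independent sets, upper bound $\alpha(G \boxtimes H) \leq \chi(G^c)\,\alpha(H)$ via a clique cover and the injective projection of each slice into an independent set of $H$, then the Perfect Graph Theorem to identify $\chi(G^c) = \omega(G^c) = \alpha(G)$). All three steps check out, including the injectivity claim: two distinct points of $I$ over a common clique $Q_i$ cannot share or have adjacent $H$-coordinates without being adjacent in the strong product. Your closing remark is also accurate and is, incidentally, the same mechanism the paper's own Theorem \ref{th:perfect} exploits via its clique-partition lemma.
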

    \begin{theorem}[Lovász \cite{LOVASZ197295}]
        A graph $G$ is perfect if and only if $G^c$ is perfect.
    \end{theorem}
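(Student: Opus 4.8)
The statement to prove is the \emph{weak perfect graph theorem}: $G$ is perfect if and only if $G^c$ is perfect, where a graph is \emph{perfect} when $\chi(F)=\omega(F)$ for every induced subgraph $F$ (here $\chi$ and $\omega$ denote the chromatic and clique numbers). The plan is to route the proof through a characterization of perfection, due to Lovász, that is manifestly invariant under complementation, so that the theorem falls out with no extra work. Concretely, I would prove
\begin{equation*}
G \text{ is perfect} \iff |V(F)| \le \alpha(F)\,\omega(F) \text{ for every induced subgraph } F \subseteq G.
\end{equation*}
Granting this, the theorem is immediate: the induced subgraphs of $G^c$ are exactly the complements $G^c[S]=(G[S])^c$ of induced subgraphs of $G$, and $\alpha\big((G[S])^c\big)=\omega(G[S])$, $\omega\big((G[S])^c\big)=\alpha(G[S])$, while the vertex count is unchanged. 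Hence the displayed condition holds for $G$ exactly when it holds for $G^c$, and the characterization transfers perfection from one to the other.

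The forward direction of the characterization is routine: if $G$ is perfect then each induced $F$ satisfies $\chi(F)=\omega(F)$, so $V(F)$ partitions into $\omega(F)$ stable sets each of size at most $\alpha(F)$, giving $|V(F)|\le \alpha(F)\,\omega(F)$. For the converse I would argue by contradiction through a \emph{minimal imperfect} graph $G$ (imperfect, but with every proper induced subgraph perfect) and show it must satisfy $|V(G)|\ge \alpha\omega+1$, where $\alpha=\alpha(G)$ and $\omega=\omega(G)$; this violates the hypothesised inequality, so no graph satisfying it can be imperfect. One recurring observation drives the whole argument: for any nonempty stable set $T$, the graph $G-T$ is a proper induced subgraph, hence perfect, and if $\omega(G-T)\le \omega-1$ then $\chi(G)\le \chi(G-T)+1=\omega(G-T)+1\le \omega$, contradicting $\chi(G)>\omega$; therefore $\omega(G-T)=\omega$ for every such $T$.

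The heart is the construction of a linear-algebraic witness. Fix a maximum stable set $S_0$ with $|S_0|=\alpha$. For each of its $\alpha$ vertices $v$, the perfect graph $G-v$ has $\chi(G-v)=\omega(G-v)=\omega$, so an optimal colouring yields $\omega$ nonempty stable sets partitioning $V\setminus\{v\}$; collecting these over all $v\in S_0$ produces $\alpha\omega$ stable sets, which with $S_0$ give a family $S_0,\dots,S_{\alpha\omega}$, and a short count shows every vertex lies in exactly $\alpha$ of them. Next, applying the observation above with $T=S_j$ gives $\omega(G-S_j)=\omega$, hence a clique $K_j$ of size $\omega$ disjoint from $S_j$. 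Form the $(\alpha\omega+1)\times(\alpha\omega+1)$ matrix $M$ with $M_{ij}=|S_i\cap K_j|$: its diagonal is $0$ by construction, each off-diagonal entry is at most $1$ since a clique meets a stable set at most once, and since each vertex lies in exactly $\alpha$ of the $S_i$, every column sums to $\sum_{u\in K_j}\alpha=\alpha\omega$. With $\alpha\omega$ off-diagonal entries per column, each $\le 1$ and summing to $\alpha\omega$, all of them equal $1$; thus $M=J-I$, which is nonsingular. Writing $N$ and $P$ for the $n\times(\alpha\omega+1)$ vertex–stable-set and vertex–clique incidence matrices, we have $M=N^{\mathsf T}P$, so $\alpha\omega+1=\operatorname{rank}(M)\le n$, yielding the desired $|V(G)|\ge \alpha\omega+1$.

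The step I expect to be the main obstacle is pinning the incidence matrix to be \emph{exactly} $J-I$: it is the global fact that every vertex is covered by precisely $\alpha$ of the chosen stable sets, together with the uniform clique sizes, that rigidly forces every off-diagonal entry to $1$, and any slack there would destroy nonsingularity and the rank bound. Verifying the enabling facts $\omega(G-v)=\omega$ and $\omega(G-S_j)=\omega$ from minimality is short but essential, and the rank step must use only the colouring side of perfection, since the clique-cover side is precisely what this theorem would supply and invoking it would be circular. I note that Lovász's original route instead proves a \emph{replication lemma} (duplicating a vertex into a clique preserves perfection, via a case split on whether the vertex lies in a maximum clique) and concludes by a polyhedral count; the linear-algebraic route sketched above sidesteps replication entirely, so I would adopt it.
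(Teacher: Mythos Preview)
The paper does not actually prove this theorem: it is stated with attribution to Lov\'asz and the citation \cite{LOVASZ197295}, and then used as a black box in the proof of Theorem~\ref{th:perfect} and its auxiliary lemma. So there is no ``paper's own proof'' to compare against.

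Your proposal is nonetheless a correct and self-contained proof. It is the Gasparian linear-algebraic argument: reduce the weak perfect graph theorem to the complementation-symmetric characterization $|V(F)|\le\alpha(F)\,\omega(F)$ for all induced $F$, then show a minimal imperfect $G$ must have $|V(G)|\ge\alpha\omega+1$ by building $\alpha\omega+1$ stable sets covering each vertex exactly $\alpha$ times, pairing each with an $\omega$-clique avoiding it, and observing that the resulting $(\alpha\omega+1)\times(\alpha\omega+1)$ incidence matrix $N^{\mathsf T}P$ is forced to equal $J-I$. The count that every vertex lies in exactly $\alpha$ of the $S_i$ checks out (split on whether the vertex is in $S_0$), the column-sum argument forcing $M=J-I$ is clean, and your caution about not invoking clique covers is well placed. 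You might note explicitly that a minimal imperfect $G$ has $\chi(G)>\omega(G)$ because the witnessing bad induced subgraph must be $G$ itself, but this is implicit in your use of ``contradicting $\chi(G)>\omega$.'' This route is indeed different from Lov\'asz's original replication-lemma proof that you mention at the end; either would be acceptable here, since the paper supplies neither.
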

    \noindent
    The above theorem is know as the perfect graph theorem.
    \begin{theorem}[Alipour and Gohari \cite{alipour2023relative}]
        \label{th:perfectrelative}
        If $G$ is a perfect graph and $H$ is an arbitrary graph, then $\alpha^{*}(G|H) = \frac{\alpha(G)}{\alpha(H)}$.
    \end{theorem}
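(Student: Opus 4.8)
The plan is to establish the two matching inequalities $\alpha^{*}(G|H) \geq \frac{\alpha(G)}{\alpha(H)}$ and $\alpha^{*}(G|H) \leq \frac{\alpha(G)}{\alpha(H)}$ separately, then combine them. Recalling that $\alpha^{*}(G|H) = \sup_W \frac{\alpha(G \boxtimes W)}{\alpha(H \boxtimes W)}$, the first inequality is a lower bound on a supremum, so it suffices to exhibit a single witness graph $W$; the second is a uniform upper bound over all $W$, and this is the only place where the perfectness of $G$ is actually needed.

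For the lower bound, I would take $W$ to be the single-vertex graph. Then $G \boxtimes W \cong G$ and $H \boxtimes W \cong H$, so the ratio realized by this particular $W$ is exactly $\frac{\alpha(G)}{\alpha(H)}$; hence the supremum defining $\alpha^{*}(G|H)$ is at least this value. Equivalently, one may simply invoke the cited bound $\frac{X(G)}{X(H)} \leq \alpha^{*}(G|H)$ with $X = \alpha$.

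For the upper bound, fix an arbitrary $W$ and estimate the numerator and denominator separately. Because $G$ is perfect, Rosenfeld's theorem gives the exact identity $\alpha(G \boxtimes W) = \alpha(G)\,\alpha(W)$. For the denominator, the general supermultiplicative inequality $\alpha(H \boxtimes W) \geq \alpha(H)\,\alpha(W)$ — already used in the $\mathcal{NP}$-hardness proof and valid for every graph, since the product of maximum independent sets of $H$ and $W$ is independent in $H \boxtimes W$ — applies without any assumption on $H$. Dividing, the factors of $\alpha(W)$ cancel and we obtain $\frac{\alpha(G \boxtimes W)}{\alpha(H \boxtimes W)} \leq \frac{\alpha(G)\,\alpha(W)}{\alpha(H)\,\alpha(W)} = \frac{\alpha(G)}{\alpha(H)}$. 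Since $W$ was arbitrary, taking the supremum yields $\alpha^{*}(G|H) \leq \frac{\alpha(G)}{\alpha(H)}$, and together with the lower bound this gives the claimed equality.

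There is no serious technical obstacle once the two ingredients are assembled; the point requiring care is the \emph{asymmetry} between the roles of $G$ and $H$. Perfectness of $G$ is essential precisely because it upgrades the numerator estimate from the one-sided inequality $\alpha(G \boxtimes W) \geq \alpha(G)\,\alpha(W)$ (which holds for all graphs but points the wrong way) to an exact equality, so that it can be paired with the lower denominator bound to cap the ratio uniformly. If $G$ were not perfect, $\alpha(G \boxtimes W)$ could strictly exceed $\alpha(G)\,\alpha(W)$ for some $W$, the cancellation would fail, and the upper bound would break down. Notably, the Perfect Graph Theorem is not required for this argument — only Rosenfeld's multiplicativity result and the elementary supermultiplicative bound are used.
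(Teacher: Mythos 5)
Your proof is correct. The paper itself states this result without proof, citing Alipour and Gohari, so there is nothing internal to compare against; but your two-sided argument --- the single-vertex witness $W$ for the lower bound, and Rosenfeld's identity $\alpha(G \boxtimes W) = \alpha(G)\,\alpha(W)$ combined with the supermultiplicative bound $\alpha(H \boxtimes W) \geq \alpha(H)\,\alpha(W)$ for the uniform upper bound --- is sound, uses only ingredients already quoted elsewhere in the paper, and is the standard way this theorem is established. Your closing remark correctly isolates where perfectness of $G$ is needed and why the Perfect Graph Theorem is not.
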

    \begin{theorem}
    \label{th:perfect}
    If $G$ is a perfect graph and $H$ is an arbitrary graph, then $\alpha^{*}(G|H) \leq 1$ if and only if $G \in \text{Expand}(H)$.
    \end{theorem}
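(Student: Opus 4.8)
The plan is to treat the two implications separately. The forward direction, that $G \in \text{Expand}(H)$ forces $\alpha^{*}(G|H) \leq 1$, is already delivered by Theorem \ref{th:triv} and needs nothing new. For the converse I would first cash in the unusually clean evaluation available in the perfect case: by Theorem \ref{th:perfectrelative}, since $G$ is perfect, $\alpha^{*}(G|H) = \frac{\alpha(G)}{\alpha(H)}$, so the assumption $\alpha^{*}(G|H) \leq 1$ collapses to the single numerical inequality $\alpha(G) \leq \alpha(H)$. The whole task therefore reduces to showing that whenever a perfect graph $G$ satisfies $\alpha(G) \leq \alpha(H)$, one can assemble $G$ out of $H$ using only the three expansion operations.

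The structural ingredient I would invoke is that for a perfect graph the clique cover number coincides with the independence number. Concretely, by the Perfect Graph Theorem $G^{c}$ is perfect, hence $\chi(G^{c}) = \omega(G^{c})$; since $\chi(G^{c})$ is exactly the minimum number of cliques covering $V(G)$ and $\omega(G^{c}) = \alpha(G)$, the vertex set of $G$ partitions into $a := \alpha(G)$ vertex-disjoint cliques $Q_1, \dots, Q_a$. The construction is then immediate. Because $\alpha(H) \geq a$, choose an independent set $\{h_1, \dots, h_a\}$ in $H$ and delete every remaining vertex (operation 1), leaving $a$ isolated vertices. Replace the $i$-th of these by a clique of size $|Q_i|$ (operation 3); as these vertices are isolated, the result is the disjoint union of cliques $Q_1, \dots, Q_a$, which has exactly $\sum_i |Q_i| = |V(G)|$ vertices and whose edges are precisely the within-clique edges of $G$. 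Hence it is a spanning subgraph of $G$, and iterating operation 2 to insert the remaining (between-clique) edges of $G$ finishes the build, so $G \in \text{Expand}(H)$.

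I would also note that this argument sits neatly inside the projection framework of Theorem \ref{th:cycle}: taking $I = \{(u, h_i) : u \in Q_i,\ 1 \leq i \leq a\}$ produces an independent set of $G^{c} \boxtimes H$ (distinct pairs agree in the $H$-coordinate only when their $G$-coordinates lie in a common $Q_i$ and are thus nonadjacent in $G^{c}$, and differ in the $H$-coordinate only along a nonedge of $H$) whose projection covers every vertex of $G$. That covering property is exactly the hypothesis of \textbf{Case 1}, so in the language of the remark following Theorem \ref{th:cycle} the construction is precisely a demonstration that the problematic \textbf{Case 2} never arises for perfect $G$. The step I expect to carry the real weight is the identification of the clique cover of size $\alpha(G)$: everything else is bookkeeping, but it is this consequence of perfection that guarantees the cliques glued onto the independent set of $H$ exhaust $V(G)$ and reassemble into $G$ once edges are added. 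The one subtlety I would verify carefully is that $\{h_1, \dots, h_a\}$ carries no internal edges, so that the clique-replacement step genuinely yields a disjoint union and no spurious edges survive into the spanning subgraph.
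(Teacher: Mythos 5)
Your proof is correct and follows essentially the same route as the paper: both reduce the problem to $\alpha(G) \leq \alpha(H)$ via Theorem \ref{th:perfectrelative}, both hinge on the same lemma that perfection (via the Perfect Graph Theorem applied to $G^{c}$) yields a partition of $V(G)$ into $\alpha(G)$ cliques, and both rebuild $G$ by blowing up an independent set of $H$ into that clique cover. Your execution is in fact a little cleaner, since you skip the paper's detour through $\alpha(H \boxtimes G^{c})$ and the projection machinery and instead delete $H$ directly down to an independent set of size $\alpha(G)$ before applying the clique-replacement operation.
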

    \begin{proof}
        Based on Theorem \ref{th:perfectrelative}, we know that if $G$ is a perfect graph, then $\alpha^{*}(G|H) = \frac{\alpha(G)}{\alpha(H)}$. Therefore, $\alpha^{*}(G|H) \leq 1$ implies that $\alpha(G) \leq \alpha(H)$. Additionally, it implies that for any arbitrary graph $W$, we have:

        \[
        \frac{\alpha(G \boxtimes W)}{\alpha(H \boxtimes W)} \leq 1
        \]
        
        By setting $W = G^c$, we obtain:
        
        \[
        \frac{n_G}{\alpha (H \boxtimes G^c)} \leq \frac{\alpha(G \boxtimes G^c)}{\alpha (H \boxtimes G^c)} \leq 1 \xrightarrow{\quad} n_G \leq \alpha(H \boxtimes G^c)
        \]
        
        Using the above inequalities, we will attempt to construct $G$ from $H$ by applying the expansion operations.
        
        By the Perfect Graph Theorem, we know that $G^c$ is also perfect. Therefore, we have $n_G \leq \alpha(H) \times \alpha(G^c)$. Consider the maximum independent set of $H \boxtimes G^c$ as the Cartesian product of the maximum independent sets of $H$ and $G^c$.
        
       We apply the technique used in the previous proof for cycles and replace vertices of $H$ with cliques of the size equal to the cardinality of their projection, resulting in the graph $H'$. Due to the selection of the maximum independent set, $H'$ is $\alpha(W)$ disjoint cliques, each of size $\alpha(G^c)$.
        
        Now we will prove a lemma that is critical for the remainder of our proof.

        \begin{lemma}
            If $G$ is a perfect graph, then its vertices can be partitioned into $\alpha(G)$ disjoint cliques.
        \end{lemma}
        \begin{proof}
            We know that if a graph $G$ is perfect, then for every induced subgraph of $G$, the maximum clique size is equal to the chromatic number. Moreover, by the perfect graph theorem, we know that if $G$ is perfect, then its complement $G^c$ is also perfect. So the maximum clique size of $G^c$, which is the same as the independence number of $G$, is equal to the chromatic number of $G^c$.
            
           If we view vertices of the same color as a set, then coloring a graph is just partitioning its vertices into disjoint independent sets. Therefore, a coloring of $G^c$ is equivalent to partitioning the vertices of $G$ into cliques. Because the chromatic number of $G^c$ is equal to $\alpha(G)$, this partition consists of $\alpha(G)$ cliques. Hence, the lemma is proven.
        \end{proof}
        Now, using the above theorem, we can observe that since cliques of $G^c$ are, in fact, independent sets of $G$, these cliques have a maximum size of $\alpha(G^c)$. This implies that we can remove some vertices from $H'$ to obtain $H''$ such that $H''$ precisely represents the clique partition of $G$.

       Therefore, $H''$ is a spanning subgraph of $G$, and by adding the remaining edges to $H''$, we will complete the construction of $G$ from $H$.
    \end{proof}
    \begin{corollary}
        From the above theorems, we can see that Conjecture \ref{conj:below1} is true for many famous classes of graphs such as bipartite graphs, cycles, stars, and friendship graphs.
    \end{corollary}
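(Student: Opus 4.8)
The plan is to reduce each listed family to one of the two cases already settled: Theorem \ref{th:cycle} for cycles and Theorem \ref{th:perfect} for perfect graphs. Both theorems assert precisely the statement of Conjecture \ref{conj:below1} under a hypothesis on $G$ alone, with $H$ arbitrary. Hence it suffices to verify that every graph in each named family is either a cycle or a perfect graph; the equivalence $\alpha^{*}(G|H) \le 1 \Leftrightarrow G \in \text{Expand}(H)$ then transfers immediately for all $H$.

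First I would dispatch the easy families. Cycles are covered verbatim by Theorem \ref{th:cycle}, so nothing is needed there. For bipartite graphs I would invoke the classical fact that every bipartite graph is perfect (each induced subgraph is again bipartite, so its clique number and chromatic number agree), and then apply Theorem \ref{th:perfect}. Stars $K_{1,n}$ are trees and therefore bipartite, so they are subsumed by the bipartite case and require no separate treatment.

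The only family demanding a genuine structural argument is the friendship graphs $F_k$, consisting of $k$ triangles sharing a single apex $c$. Here the plan is to show that $F_k$ is chordal, hence perfect. The key observation is that $F_k$ has no induced cycle of length at least four: a cycle passing through $c$ would force every one of its remaining vertices to be adjacent to $c$ and thereby create chords, while a cycle avoiding $c$ would have to lie inside the outer perfect matching, which is acyclic. The only induced cycles are thus the defining triangles, so $F_k$ is chordal; since chordal graphs are perfect, Theorem \ref{th:perfect} again applies.

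The main obstacle, to the extent that there is one, is precisely this verification of perfection for the friendship graphs; once it is in place, the corollary is a direct appeal to the two preceding theorems. I would state the reductions uniformly so as to make transparent that the conjecture holds for \emph{any} family all of whose members are cycles or perfect graphs, with the friendship graphs serving merely as the one illustrative, non-obvious instance of perfection.
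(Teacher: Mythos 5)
Your proposal is correct and matches the paper's (implicit) reasoning exactly: cycles are covered by Theorem \ref{th:cycle}, while bipartite graphs, stars, and friendship graphs are perfect (the latter being chordal, as you verify) and hence covered by Theorem \ref{th:perfect}. The paper leaves these reductions unstated, so your write-up simply makes explicit what "from the above theorems" is meant to convey.
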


    \section{Open Problems and Future Directions} 
    We have demonstrated the truth of Conjecture \ref{conj:below1} for various classes of graphs. However, proving or disproving its correctness for all classes of graphs remains an intriguing future research direction. One interesting special case to investigate is when $G$ is vertex-transitive. Due to the symmetry in vertex-transitive graphs, one can reasonably assume that Case 2 of the proof for Theorem \ref{th:cycle} will not occur. Consequently, it appears probable that this conjecture holds true for this particular class of graphs.

    Next, we have established that the calculation of the relative fractional packing number is $\mathcal{NP}$-Hard. Nonetheless, we have successfully determined its exact value when both graphs $G$ and $H$ are cycles. One can try to generalize this proof and compute the relative fractional packing number for other scenarios, obtaining the relative fractional packing number for different classes of graphs. This exploration can provide insights into the relationship between the fractional packing number and the structure of various graph classes.

    Lastly, while investigating the properties of the relative fractional packing number, we were able to prove the following theorem:
    \begin{theorem}
        \label{thm:compact}
        For a graph $G$ and a fractional number $\alpha(G) \leq a \leq \alpha^{*}(G)$, there exists a graph $W_{a}$ such that:
        \[
        \frac{\alpha(G \boxtimes W_a)}{ \alpha (W_a)} = a
        \]
        
    \end{theorem}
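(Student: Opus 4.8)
The plan is to realize $W_a$ as a disjoint union of two extreme building blocks and to exploit the fact that the two quantities in the ratio are additive under disjoint union. First I would record the endpoints of the interval. Taking $W = K_1$ (a single vertex) gives $G \boxtimes K_1 = G$ and $\alpha(K_1) = 1$, so the ratio equals $\alpha(G)$; this realizes the left endpoint. For the right endpoint I would invoke the result of Hales quoted in the Preliminaries: the supremum defining $\alpha^{*}(G)$ is attained, so there is a graph $W^{*}$ with $\frac{\alpha(G\boxtimes W^{*})}{\alpha(W^{*})} = \alpha^{*}(G)$. Note also that, since these are ratios of integers, the quantity $\frac{\alpha(G\boxtimes W)}{\alpha(W)}$ is always rational; hence the statement should be read with $a$ rational, which I will assume.

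The key structural observation is that both numerator and denominator are additive under disjoint union. Since the strong product distributes over disjoint union, $G \boxtimes (W_1 \sqcup W_2) = (G\boxtimes W_1)\sqcup(G\boxtimes W_2)$, and since the independence number of a disjoint union is the sum over components, I get
\[
\alpha\bigl(G \boxtimes (W_1 \sqcup W_2)\bigr) = \alpha(G\boxtimes W_1) + \alpha(G\boxtimes W_2), \qquad \alpha(W_1 \sqcup W_2) = \alpha(W_1)+\alpha(W_2).
\]
Consequently, letting $W_{p,q}$ denote the disjoint union of $p$ copies of $K_1$ and $q$ copies of $W^{*}$, I obtain
\[
\frac{\alpha(G\boxtimes W_{p,q})}{\alpha(W_{p,q})} = \frac{p\,\alpha(G) + q\,\alpha(G\boxtimes W^{*})}{p + q\,\alpha(W^{*})},
\]
a weighted mediant of the two endpoint ratios.

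It then remains to choose non-negative integers $p,q$, not both zero, so that this mediant equals the target $a = s/t$. Clearing denominators, the equation is linear in $p,q$ and rearranges to
\[
p\,(t\,\alpha(G) - s) = q\,\bigl(s\,\alpha(W^{*}) - t\,\alpha(G\boxtimes W^{*})\bigr).
\]
The hypothesis $\alpha(G) \le a$ forces $t\,\alpha(G) - s \le 0$, and the hypothesis $a \le \alpha^{*}(G) = \frac{\alpha(G\boxtimes W^{*})}{\alpha(W^{*})}$ forces $s\,\alpha(W^{*}) - t\,\alpha(G\boxtimes W^{*}) \le 0$; both coefficients share the same sign, so a non-negative rational solution $p/q$ exists, and clearing its denominator yields admissible integers. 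I would check the two boundary cases separately: $a = \alpha(G)$ corresponds to $q = 0$ (copies of $K_1$ only) and $a = \alpha^{*}(G)$ to $p = 0$ (copies of $W^{*}$ only). Setting $W_a = W_{p,q}$ then finishes the argument.

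As for difficulty, there is no serious obstacle once additivity over disjoint unions is in hand; the construction is essentially an interpolation (mediant) argument between the two extremal graphs. The only points requiring care are the observation that $a$ must be rational, so the statement is genuinely about rational $a$ in the interval, and the sign bookkeeping that guarantees $p$ and $q$ can be taken non-negative. The single external input I rely on is Hales's attainment of the supremum, which supplies the right-endpoint graph $W^{*}$.
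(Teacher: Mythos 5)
Your argument is correct and complete: the endpoints are realized by $K_1$ and by Hales's maximizer $W^{*}$, the strong product does distribute over disjoint union so both numerator and denominator are additive over components, and the sign analysis shows the linear equation $p\,(t\,\alpha(G)-s) = q\,(s\,\alpha(W^{*}) - t\,\alpha(G\boxtimes W^{*}))$ always admits a non-negative integer solution with $p+q\geq 1$ (both coefficients are non-positive integers, so one may take $p$ and $q$ to be their absolute values, handling the boundary cases as you do). Your remark that $a$ must be rational is also a genuine and necessary clarification, since the ratio $\alpha(G\boxtimes W)/\alpha(W)$ is always rational and the theorem as stated would be false for irrational $a$. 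There is nothing to compare against here: the paper states Theorem \ref{thm:compact} without supplying a proof, so your mediant interpolation over disjoint unions of $K_1$ and $W^{*}$ stands as a self-contained justification of the claim, relying only on the attainment of the supremum quoted from Hales in the Preliminaries.
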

    \noindent
    The Lovász number of a graph $G$ satisfies $\alpha(G) \leq \nu(G) \leq \alpha^{*}(G)$. By applying Theorem \ref{thm:compact}, we conclude that there exists a graph $W_{\nu}$ such that $\frac{\alpha(G \boxtimes W_{\nu})}{\alpha(W_{\nu})} = \nu(G)$. This observation suggests the potential existence of an algorithmic method to construct the graph $W_{\nu}$. Similar to the independence number, which is defined by an Integer Program, and the fractional packing number, which is defined by a Linear Program, the Lovász number is defined by a Semi Definite Program. Therefore, if such a property exists, it could potentially be deduced by analyzing the constraints and optimal solutions of the associated Semi Definite Program.

    \section{Acknowledgment}
    The author extends gratitude to Prof. Amin Gohari for his corrections, valuable comments, and introduction to the problem addressed in this paper.
    
\bibliographystyle{plain}
\bibliography{Relative_Fractional_Packing_Number_and_Its_Properties}

\begin{thebibliography}{1}

\bibitem{alipour2023relative}
Sharareh Alipour and Amin Gohari.
\newblock Relative fractional independence number and its applications.
\newblock {\em arXiv preprint arXiv:2307.06155}, 2023.

\bibitem{BAZGAN2005272}
Cristina Bazgan, Bruno Escoffier, and Vangelis~Th. Paschos.
\newblock Completeness in standard and differential approximation classes: Poly-(d)apx- and (d)ptas-completeness.
\newblock {\em Theoretical Computer Science}, 339(2):272--292, 2005.

\bibitem{book_1975}
Ronald~V. Book.
\newblock Richard m. karp. reducibility among combinatorial problems. complexity of computer computations, proceedings of a symposium on the complexity of computer computations, held march 20-22, 1972, at the ibm thomas j. watson center, yorktown heights, new york, edited by raymond e. miller and james w. thatcher, plenum press, new york and london 1972, pp. 85–103.
\newblock {\em The Journal of Symbolic Logic}, 40(4):618–619, 1975.

\bibitem{hamer}
W.H. Haemers.
\newblock {\em An upper bound for the Shannon capacity of a graph}, volume~25 of {\em Colloquia Mathematica Societatis Janos Bolyai}, pages 267--272.
\newblock North-Holland Publishing Company, 1981.

\bibitem{hales1973numerical}
RS~Hales.
\newblock Numerical invariants and the strong product of graphs.
\newblock {\em Journal of Combinatorial Theory, Series B}, 15(2):146--155, 1973.

\bibitem{Krner1998ZeroErrorIT}
J{\'a}nos K{\"o}rner and Alon Orlitsky.
\newblock Zero-error information theory.
\newblock {\em IEEE Trans. Inf. Theory}, 44:2207--2229, 1998.

\bibitem{lovasz1979shannon}
L{\'a}szl{\'o} Lov{\'a}sz.
\newblock On the shannon capacity of a graph.
\newblock {\em IEEE Transactions on Information theory}, 25(1):1--7, 1979.

\bibitem{LOVASZ197295}
L~Lovász.
\newblock A characterization of perfect graphs.
\newblock {\em Journal of Combinatorial Theory, Series B}, 13(2):95--98, 1972.

\bibitem{rosenfeld1967problem}
M~Rosenfeld.
\newblock On a problem of ce shannon in graph theory.
\newblock {\em Proceedings of the American Mathematical Society}, 18(2):315--319, 1967.

\end{thebibliography}

\newpage

\section*{Appendix A}
\subsection{Generalized Independence Number of a graph}

\begin{definition}
For a graph $G$ and an integer $k$, we define $\alpha_k(G)$ as the maximum number of vertices that can be selected from $G$ such that, from any clique in $G$, at most $k$ vertices are chosen. It is important to note that a vertex may be selected multiple times.
\end{definition}

\begin{remark}
You can see that if $k = 1$, the above definition is the same as the normal independence number of a graph.
\end{remark}

\begin{remark}
\label{remark:frac}
If we define $\alpha_{k}^{*}$ similar to the fractional packing number, then we have $\alpha_{k}^{*}(G) = k \alpha^{*}(G)$ because the linear program to compute $\alpha_{k}^{*}(G)$ is basically $k$ times the linear program to calculate $\alpha^{*}(G)$.
\end{remark}

Note that Remark \ref{remark:frac} does not hold for the integral version, meaning that we don't necessarily have equality in $\alpha_{k}(G) \geq k \alpha(G)$. A counterexample for this equality occurs when $k = 2$ and $G = C_5$. In this case, we have $2 \cdot \alpha(C_5) = 4$, but $\alpha_{2}(C_5) = 5$.

\begin{theorem}
The generalized independence number is superadditive, meaning that for a graph $G$ and two integral numbers $k_1$ and $k_2$, we have $\alpha_{k_1}(G) + \alpha_{k_2}(G) \leq \alpha_{k_1 + k_2}(G)$.
\end{theorem}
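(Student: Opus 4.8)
The plan is to reformulate $\alpha_k(G)$ as an integer program and then simply add together optimal solutions for the two parameters. Recall that, since a vertex may be selected multiple times, a selection counted by $\alpha_k(G)$ is equivalent to assigning a nonnegative integer multiplicity $x_v$ to each vertex $v \in V(G)$ subject to $\sum_{v \in C} x_v \le k$ for every clique $C \in \mathcal{C}$, and then maximizing $\sum_{v} x_v$. In other words,
\[
\alpha_k(G) = \max\Bigl\{ \textstyle\sum_{v \in V} x_v \;:\; x \in \mathbb{Z}_{\ge 0}^{V},\ \sum_{v \in C} x_v \le k \ \ \forall C \in \mathcal{C} \Bigr\}.
\]
This is precisely the program underlying $\alpha^{*}(G)$ with the right-hand side $1$ replaced by $k$, consistent with Remark \ref{remark:frac}.

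First I would fix optimal integral solutions $x^{(1)}$ and $x^{(2)}$ for the parameters $k_1$ and $k_2$ respectively, so that $\sum_v x^{(1)}_v = \alpha_{k_1}(G)$ and $\sum_v x^{(2)}_v = \alpha_{k_2}(G)$. Then I would form their coordinatewise sum $x := x^{(1)} + x^{(2)}$, which is again a vector of nonnegative integers and hence corresponds to a legitimate (multiset) selection of vertices.

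The key step is to verify that $x$ is feasible for the parameter $k_1 + k_2$. For any clique $C \in \mathcal{C}$ we have
\[
\sum_{v \in C} x_v = \sum_{v \in C} x^{(1)}_v + \sum_{v \in C} x^{(2)}_v \le k_1 + k_2,
\]
using feasibility of $x^{(1)}$ and $x^{(2)}$. Therefore $x$ is an admissible selection for $\alpha_{k_1+k_2}(G)$, and consequently
\[
\alpha_{k_1+k_2}(G) \ge \sum_{v} x_v = \sum_v x^{(1)}_v + \sum_v x^{(2)}_v = \alpha_{k_1}(G) + \alpha_{k_2}(G),
\]
which is exactly the claimed superadditivity.

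The argument becomes essentially immediate once the multiset formulation is in place, so there is no genuine obstacle; the only point deserving care is the clause ``a vertex may be selected multiple times'' in the definition. It is precisely this clause that makes the feasible region closed under addition (with the clique bounds adding accordingly), so that superimposing the two optimal selections never violates a constraint even when they reuse the same vertices. Without allowing multiplicities, $x^{(1)} + x^{(2)}$ could place more than one token on a single vertex and fall outside the admissible selections, and this clean additive argument would break down.
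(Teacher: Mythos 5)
Your argument is correct and is essentially the same as the paper's: the paper also observes that summing feasible solutions of the integer programs for $k_1$ and $k_2$ yields a feasible solution for $k_1+k_2$. You simply spell out the multiset/integer-program formulation and the clique-constraint check that the paper leaves implicit.
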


\begin{proof}
It is easy to see because if $x$ is a solution to the integer program for finding $\alpha_{k_1}(G)$ and $y$ is a solution to the integer program for finding $\alpha_{k_2}(G)$, then $x+y$ is a solution to the integer program for finding $\alpha_{k_1 + k_2}(G)$.
\end{proof}

Now, we can show that we can calculate the fractional packing number using the generalized version of the independence number.
\begin{theorem}
\label{thm:ineq}
We have $\alpha_{k}(G \boxtimes W) \leq \alpha^{*}(G) \alpha_{k}(W)$.
\end{theorem}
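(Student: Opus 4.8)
The plan is to adapt the standard proof of the inequality $\alpha(G\boxtimes W)\le\alpha^*(G)\,\alpha(W)$ (the easy direction of Hales's identity) to the generalized independence number, applying linear-programming duality on the $G$-side while keeping the $W$-side integral. First I would fix an optimal selection for $\alpha_k(G\boxtimes W)$ and record it as multiplicities $z_{(u,w)}\in\mathbb{Z}_{\ge 0}$ for $(u,w)\in V(G)\times V(W)$, so that $\sum_{(u,w)}z_{(u,w)}=\alpha_k(G\boxtimes W)$ and every clique of $G\boxtimes W$ carries total multiplicity at most $k$. On the $G$-side I would invoke the dual of the packing linear program defining $\alpha^*(G)$, namely an optimal \emph{fractional clique cover}: weights $\{y_C\}_{C\in\mathcal{C}}$ with $\sum_{C\ni u}y_C\ge 1$ for every $u\in V(G)$ and $\sum_{C\in\mathcal{C}}y_C=\alpha^*(G)$.

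The key step is to isolate the right integral quantity to charge against. For each clique $C\in\mathcal{C}$ of $G$, define the clique-profile $f_C(w)=\sum_{u\in C}z_{(u,w)}$ on the vertices of $W$. I claim each $f_C$ is itself a feasible selection for $\alpha_k(W)$. Indeed, for any clique $C_W$ of $W$ the product $C\times C_W$ is a clique of $G\boxtimes W$ (a product of cliques is a clique in the strong product), so $\sum_{w\in C_W}f_C(w)=\sum_{(u,w)\in C\times C_W}z_{(u,w)}\le k$. Since $f_C$ is integral and obeys the per-clique bound $k$ on $W$, it follows that $\sum_{w}f_C(w)\le\alpha_k(W)$. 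This is the crucial point at which integrality is preserved: the profile is never rescaled.

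With these pieces in place the estimate closes by a single double count. Using $\sum_{C\ni u}y_C\ge 1$ vertexwise and then exchanging the order of summation,
\[
\alpha_k(G\boxtimes W)=\sum_{(u,w)}z_{(u,w)}\le\sum_{(u,w)}z_{(u,w)}\sum_{C\ni u}y_C=\sum_{C\in\mathcal{C}}y_C\sum_{w}f_C(w)\le\Big(\sum_{C\in\mathcal{C}}y_C\Big)\alpha_k(W)=\alpha^*(G)\,\alpha_k(W).
\]
The step I expect to be the genuine obstacle is precisely the one avoided here: the naive route—projecting the selection onto $W$ as $Z_w=\sum_u z_{(u,w)}$ and normalizing by $\alpha^*(G)$—only yields the weaker fractional bound $\alpha^*(G)\,\alpha_k^*(W)=k\,\alpha^*(G)\,\alpha^*(W)$, because dividing an integral vector by the (generally non-integral) value $\alpha^*(G)$ destroys feasibility for the \emph{integral} program defining $\alpha_k(W)$. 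The whole difficulty is therefore to locate a family of integral, $\alpha_k(W)$-feasible certificates (the profiles $f_C$) through which the fractional clique cover of $G$ can be charged; once the product-of-cliques observation supplies that family, no rounding is needed and the integral bound $\alpha_k(W)$ survives on the right-hand side.
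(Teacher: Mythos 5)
Your proof is correct and hinges on exactly the same key observation as the paper's: for each clique $C$ of $G$, the aggregated projection $f_C(w)=\sum_{u\in C}z_{(u,w)}$ is an integral, $\alpha_k$-feasible selection on $W$ (because a product of cliques is a clique in the strong product), hence has total size at most $\alpha_k(W)$. The only difference is cosmetic: you finish by charging against an optimal fractional clique cover (the dual LP), while the paper divides the projection sizes by $\alpha_k(W)$ to exhibit a feasible point of the primal packing LP for $\alpha^*(G)$ --- the two finishes are equivalent by LP duality.
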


\begin{proof}
The relaxed linear program for calculating the fractional packing number is given by:
\begin{equation}
\label{LP:genfrac}
\begin{array}{lll}
\text{maximize } & \sum\limits_{u \in V} x_{u} \\
\text{subject to } & \sum\limits_{u \in C} x_{u} \leq 1 \quad \forall \, \, Clique \, \, C \\
            & x \geq 0
\end{array}
\end{equation}
Let $A$ be a maximum $k$-generalized independent set in $G \boxtimes W$. For a vertex $u \in G$, we define $A_u$ as the projection of $A$ onto vertex $u$, i.e., $A_u = {v \in W \,|\, (u,v) \in A}$. It can be easily seen that $A_u$ is still a $k$-generalized independent set, and for a set of vertices in a clique $C$ in $G$, the union of their projections is also a $k$-generalized independent set.

Now, let us set $x_v = \frac{|A_u|}{\alpha_{k}(W)}$ for all vertices $v$ in $W$. By the above arguments, we can conclude that $x$ is a feasible solution for LP \ref{LP:genfrac}. Hence, we have:
\[
\sum\limits_{u \in V} x_{u} = \sum\limits_{u \in V} \frac{|A_u|}{\alpha_{k}(W)} = \frac{\alpha_{k}(G \boxtimes W)}{\alpha_{k}(W)} \leq \alpha^{*}(G)
\]
Thus, we have shown that $\alpha_{k}(G \boxtimes W) \leq \alpha^{*}(G) \alpha_{k}(W)$, as desired.
\end{proof}
\begin{remark}
\label{remark:enqu}
If we set $W$ to be a graph with a single vertex, then the above inequality says that $\alpha_{k}(G) \leq k \alpha^{*}(G)$.
\end{remark}

\begin{theorem}
$\sup\limits_{W} \frac{\alpha_{k}(G \boxtimes W)}{\alpha_{k}(W)} = \alpha^{*}(G)$.
\end{theorem}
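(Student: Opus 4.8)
The plan is to prove the two inequalities separately; the upper bound is immediate and the lower bound carries all the content. For the upper bound, Theorem \ref{thm:ineq} gives $\alpha_k(G\boxtimes W)\le\alpha^*(G)\,\alpha_k(W)$ for \emph{every} $W$, so $\frac{\alpha_k(G\boxtimes W)}{\alpha_k(W)}\le\alpha^*(G)$ and hence the supremum is at most $\alpha^*(G)$. It therefore remains to produce graphs $W$ whose ratio approaches $\alpha^*(G)$.

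To make the target transparent I would first reformulate the lower bound. By Remark \ref{remark:frac} we have $\alpha_k^*(X)=k\,\alpha^*(X)$ for every $X$, and since the fractional packing number is multiplicative under the strong product, $\alpha_k^*(G\boxtimes W)=k\,\alpha^*(G)\,\alpha^*(W)$. Introducing the integrality ratio $s(X):=\alpha_k(X)/\alpha_k^*(X)\in(0,1]$, every $W$ satisfies
\[
\frac{\alpha_k(G\boxtimes W)}{\alpha_k(W)}=\alpha^*(G)\cdot\frac{s(G\boxtimes W)}{s(W)} .
\]
Thus the lower bound is equivalent to finding graphs $W$ for which $s(G\boxtimes W)/s(W)\to 1$, i.e. for which the $k$-generalized integer program is \emph{equally tight} on $W$ and on $G\boxtimes W$.

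The model case is $k=1$, which is precisely Hales' theorem: his maximizer $W_0$, with $\alpha(G\boxtimes W_0)=\alpha^*(G)\,\alpha(W_0)$, has $s(G\boxtimes W_0)=s(W_0)$, so the deficiency of $\alpha$ against $\alpha^*$ cancels between numerator and denominator. The plan for general $k$ is to carry this cancellation over. Starting from an optimal rational fractional packing $x^*$ of $G$ with common denominator $q$, I would build, for each $\varepsilon>0$, a graph $W$ together with a maximum $k$-generalized independent set of $G\boxtimes W$ whose projections onto the vertices of $G$ realize the weights $x^*$ up to error $\varepsilon$; by the projection argument in the proof of Theorem \ref{thm:ineq} this forces near-equality there, giving $\frac{\alpha_k(G\boxtimes W)}{\alpha_k(W)}\ge\alpha^*(G)-\varepsilon$. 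The crude first step combines the ``$k$ disjoint copies'' bound $\alpha_k(G\boxtimes W)\ge k\,\alpha(G\boxtimes W)$ with $\alpha_k(W)\le k\,\alpha^*(W)$, and the refinement is to choose $W$ so that the slacks in these two estimates match.

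The hard part is controlling $\alpha_k$, which, unlike $\alpha^*$, is neither multiplicative nor given by a single clean formula, so the naive candidates fail. For a non-perfect maximizer one has $\alpha_k(W_0)>k\,\alpha(W_0)$ (for instance $\alpha_2(C_5)=5>4$), and this slack cannot be removed by passing to strong powers or vertex blow-ups: such operations preserve the ratio $\alpha_k/(k\alpha)$, and powers even degrade $\alpha(G\boxtimes W)/\alpha(W)$ toward the Shannon capacity (for $C_5$ from $\tfrac52$ down to $\sqrt5$). Consequently I expect the supremum to be a genuine limit rather than a maximum when $k>1$, and the entire difficulty is concentrated in designing a tailored family $W_\varepsilon$ on which the integrality slack of the $k$-generalized program cancels between $G\boxtimes W_\varepsilon$ and $W_\varepsilon$ — essentially a $k$-generalized analogue of Hales' maximizer construction, which is the step I expect to require the most work.
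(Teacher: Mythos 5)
Your upper bound is exactly the paper's: Theorem \ref{thm:ineq} gives $\alpha_k(G\boxtimes W)\le \alpha^*(G)\,\alpha_k(W)$ for every $W$, so the supremum is at most $\alpha^*(G)$. The problem is the lower bound, which you yourself identify as carrying ``all the content.'' You reformulate it (correctly) in terms of the integrality ratios $s(\cdot)$, explain why the obvious candidates fail, and then defer the construction of the ``tailored family $W_\varepsilon$'' as ``the step I expect to require the most work.'' That step \emph{is} the theorem: as submitted, your argument proves only $\sup_W \alpha_k(G\boxtimes W)/\alpha_k(W)\le\alpha^*(G)$, so the proposal has a genuine gap rather than a different but complete route.

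For comparison, the paper's lower bound is short and concrete: take an optimal fractional packing $x^*$ of $G$, pick $N$ with $n_i:=Nx^*_i\in\mathbb{Z}$, and set $W=G^c(n_1,\dots,n_{|V|})$, the Hales-type blow-up. The set $\{(u,w):w \text{ in the blob of } u\}$ is independent in $G\boxtimes W$ and has size $N\alpha^*(G)$, so $\alpha_k(G\boxtimes W)\ge kN\alpha^*(G)$, and the argument is closed by bounding $\alpha_k(W)$ and invoking Remark \ref{remark:enqu}. It is worth saying that your reason for rejecting this candidate is not frivolous: your own observation $\alpha_2(C_5)=5>4=2\alpha(C_5)$ shows that for $G=C_5$, $k=2$, $N=2$ one gets $\alpha_2(W)=\alpha_2(C_5^c)=5>kN$, while $\alpha_2(C_5\boxtimes C_5^c)\le 2\alpha^*(C_5)\alpha^*(C_5^c)=12.5$, so that particular $W$ attains a ratio of at most $12/5<5/2=\alpha^*(C_5)$. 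In other words, you have correctly located the delicate point in the paper's own construction (the denominator $\alpha_k(W)$ must be controlled, which requires a more careful choice of the blow-up or an asymptotic/limit argument). But diagnosing the obstacle is not the same as overcoming it; to complete the proof you would need to actually exhibit a family $W$ (for instance, suitable blow-ups $G^c(mn_1,\dots,mn_{|V|})$ with $m$ chosen so that $\alpha_k(W)=kmN$, or an averaging argument over strong products) and verify both the numerator and the denominator estimates.
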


\begin{proof}
Based on Theorem \ref{thm:ineq}, we have one side of the inequality. For the other side, we need to show that there exists a graph $W$ for which equality holds. Let $x^{*}$ be the optimal solution to LP \ref{LP:genfrac}, and let $N$ be a large integer such that for any vertex $i \in G$, the number $n_i := x^{*}_{i}N$ is an integer.

Consider the graph $W = G^{c}(n_1, n_2, \dots, n_{|V|})$ where each vertex $u_i$ is repeated $n_i$ times. We claim that this graph maximizes the expression $\frac{\alpha_{k}(G \boxtimes W)}{\alpha_{k}(W)}$, and equality holds for this graph.

To find $\alpha_{k}(G \boxtimes W)$, we can pick vertex $u$ $n_u$ times. Since for any clique $C$, we have:
\[
\sum\limits_{u \in C} n_{u} = N \sum\limits_{u \in C} x^{*} \leq N
\]
This new set is a feasible solution for $\alpha_{k}(G \boxtimes W)$, and its value is the following:
\[\sum\limits_{u \in V} n_{u} = N \sum\limits_{u \in V} x^{*} = N \alpha^{*}(G)\]
By Remark \ref{remark:enqu}, this is the optimal solution and we have shown that $\sup\limits_{W} \frac{\alpha_{k}(G \boxtimes W)}{\alpha_{k}(W)} = \alpha^{*}(G)$.
\end{proof}

\begin{theorem}
\label{thm:oneN}
For any graph $G$, there exists an integer $k$ such that $\alpha_{k}(G) = k \alpha^{*}(G)$.
\end{theorem}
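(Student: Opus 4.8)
The plan is to exploit the fact that the fractional packing number $\alpha^{*}(G)$ is the optimum of a linear program whose data is rational — indeed integral — so that the optimum is attained at a rational point and can be scaled to an integral feasible point of the generalized problem. First I would recall from Remark \ref{remark:enqu} that $\alpha_{k}(G) \leq k\,\alpha^{*}(G)$ holds for \emph{every} integer $k$; consequently it suffices to exhibit a single value of $k$ together with a feasible integral selection of value $k\,\alpha^{*}(G)$, since that would pin $\alpha_{k}(G)$ to the upper bound.

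Next I would fix an optimal solution $x^{*}$ of the fractional packing LP \ref{LP:genfrac}. Because every clique constraint has $0$–$1$ coefficients and right-hand side $1$, and because each single vertex is itself a clique (giving $x_{i} \leq 1$), the feasible region is a bounded rational polyhedron. Hence we may take $x^{*}$ to be a vertex of this polytope, which is then a rational vector. Let $N$ be any positive integer that clears all the denominators, so that $n_{i} := N x^{*}_{i}$ is a nonnegative integer for every vertex $i$, and set $k = N$.

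Then I would check feasibility and value of the integral vector $x$ with entries $x_{i} = n_{i}$ for the integer program defining $\alpha_{N}(G)$. For each clique $C$ we have $\sum_{i \in C} n_{i} = N \sum_{i \in C} x^{*}_{i} \leq N = k$, and the $n_{i}$ are nonnegative integers, so choosing vertex $i$ exactly $n_{i}$ times is a legal selection in which no clique contributes more than $k$ vertices. Its objective value is $\sum_{i} n_{i} = N \sum_{i} x^{*}_{i} = N \alpha^{*}(G) = k\,\alpha^{*}(G)$. Combined with the reverse inequality from the first step, this forces $\alpha_{k}(G) = k\,\alpha^{*}(G)$, mirroring the scaling construction already used in the preceding supremum theorem.

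The only step I expect to require genuine care is the rationality claim: one must argue that the LP optimum is \emph{attained} at a rational vertex rather than merely approached in the limit. This is routine linear-programming theory once one notes that the constraint matrix is integral and the polytope is bounded, but it is the hinge of the whole argument, since the very existence of a finite common denominator $N$, and therefore of the integer $k$, rests entirely on it.
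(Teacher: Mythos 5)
Your proposal is correct and follows essentially the same route as the paper: the paper's proof also takes an optimal LP solution $x^{*}$, scales by an integer $N$ clearing the denominators so that $n_i = N x^{*}_i$ is a feasible integral selection of value $N\alpha^{*}(G)$, and combines this with the upper bound $\alpha_N(G)\le N\alpha^{*}(G)$ from Remark \ref{remark:enqu}. Your explicit justification that the optimum is attained at a rational vertex is a detail the paper leaves implicit, but it is the same argument.
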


\begin{proof}
Define $x^{*}$ and $N$ as in the previous theorem. By the previous inequalities it is trivial that we have $\alpha_{N}(G) = N \alpha^{*}(G)$. Hence, we have shown that for any graph $G$, there exists an integer $k$ such that $\alpha_{k}(G) = k \alpha^{*}(G)$.
\end{proof}
\begin{theorem}
For a collection of graphs $G_1, \dots , G_k$, there exists an integer $k$ such that for every $i$, $\alpha_{k}(G_i) = k \alpha^{*}(G_i)$.
\end{theorem}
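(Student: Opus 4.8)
The plan is to reduce this to the single-graph statement of Theorem \ref{thm:oneN} by a divisibility argument, so I first fix notation by letting the collection be $G_1,\dots,G_m$ (writing $m$ for the number of graphs, to avoid clashing with the integer $k$ we are trying to construct). Invoking Theorem \ref{thm:oneN} separately for each graph, I obtain for every $i$ an integer $N_i$ with $\alpha_{N_i}(G_i) = N_i\,\alpha^*(G_i)$. The goal is then to produce one integer $k$ that simultaneously witnesses this equality for all $i$, and the natural candidate is any common multiple of the $N_i$, for instance $k = N_1 N_2 \cdots N_m$.

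The crux is a closure-under-multiples lemma: if $\alpha_N(G) = N\,\alpha^*(G)$ holds for some $N$, then $\alpha_{tN}(G) = tN\,\alpha^*(G)$ holds for every positive integer $t$. I would prove this by sandwiching. Applying the superadditivity theorem $t-1$ times gives the lower bound $\alpha_{tN}(G) \geq t\,\alpha_N(G) = tN\,\alpha^*(G)$. In the opposite direction, Remark \ref{remark:enqu} supplies the universal upper bound $\alpha_{tN}(G) \leq tN\,\alpha^*(G)$. The two inequalities together force equality, which is exactly the claim.

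With the lemma established, the theorem follows immediately: since $k$ is a multiple of each $N_i$, I may write $k = t_i N_i$ for a positive integer $t_i$, and the lemma yields $\alpha_k(G_i) = k\,\alpha^*(G_i)$ for every $i$, as desired.

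I do not expect a genuine obstacle here, since every ingredient is already in place. The only point deserving care is the bookkeeping caused by the statement reusing the symbol $k$, which I resolve by renaming the collection size to $m$. The mild subtlety worth emphasizing is that Theorem \ref{thm:oneN} only guarantees the \emph{existence} of some working $N_i$ rather than a canonical value, so the closure-under-multiples lemma is precisely what is needed to merge the separate per-graph witnesses into a single common $k$.
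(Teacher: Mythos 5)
Your proposal is correct and follows essentially the same route as the paper: take $k$ to be the product of the per-graph witnesses $N_i$ from Theorem \ref{thm:oneN} and use superadditivity together with the upper bound $\alpha_M(G) \leq M\alpha^*(G)$ to pass to multiples. You merely make explicit (via your closure-under-multiples lemma) a step the paper leaves implicit, and your renaming of the collection size to $m$ sensibly fixes the statement's clash of the symbol $k$.
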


\begin{proof}
Using Theorem \ref{thm:oneN} and the superadditivity of the generalized independent number, we can see that if we set $k = N_1 \times \dots \times N_k$, where $N_i$ is a large integer such that multiplying the optimal solution of the LP for graph $G_i$ by $N_i$ gives an integer answer, then we have the desired property.
\end{proof}

\begin{theorem}
For any graph $G$, we have $\lim\limits_{n \to \infty} \frac{\alpha_{n}(G)}{n} = \alpha^{*}(G)$.
\end{theorem}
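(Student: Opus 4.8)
The plan is to recognize the sequence $(\alpha_n(G))_{n \geq 1}$ as superadditive and then apply Fekete's lemma (in its superadditive form), which gives $\lim_{n \to \infty} \alpha_n(G)/n = \sup_n \alpha_n(G)/n$; it then remains only to identify this supremum with $\alpha^{*}(G)$, and the earlier results do this almost immediately.

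First I would pin down the supremum from both sides. For the upper bound, Remark \ref{remark:enqu} gives $\alpha_n(G) \leq n\,\alpha^{*}(G)$ for every $n$, hence $\alpha_n(G)/n \leq \alpha^{*}(G)$ and therefore $\sup_n \alpha_n(G)/n \leq \alpha^{*}(G)$. For the lower bound, Theorem \ref{thm:oneN} supplies an integer $N$ with $\alpha_N(G) = N\,\alpha^{*}(G)$, so the single value $\alpha_N(G)/N = \alpha^{*}(G)$ already witnesses $\sup_n \alpha_n(G)/n \geq \alpha^{*}(G)$. Together these force $\sup_n \alpha_n(G)/n = \alpha^{*}(G)$.

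Next I would invoke superadditivity of the generalized independence number (the theorem $\alpha_{k_1}(G) + \alpha_{k_2}(G) \leq \alpha_{k_1+k_2}(G)$ proved earlier), which puts the sequence in the scope of Fekete's lemma and yields $\lim_{n\to\infty}\alpha_n(G)/n = \sup_n \alpha_n(G)/n = \alpha^{*}(G)$. If I wished to avoid quoting Fekete, I would argue directly: writing $n = mN + r$ with $0 \leq r < N$ and applying superadditivity $m$ times gives $\alpha_n(G) \geq m\,\alpha_N(G) = mN\,\alpha^{*}(G)$, so $\alpha_n(G)/n \geq (mN/n)\,\alpha^{*}(G)$; since $mN/n \to 1$ as $n \to \infty$, the $\liminf$ is at least $\alpha^{*}(G)$, and combined with the upper bound $\alpha_n(G)/n \leq \alpha^{*}(G)$ this sandwiches the limit at $\alpha^{*}(G)$.

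The honest assessment is that there is no real obstacle here: the substantive content already lives in Theorem \ref{thm:oneN} (the existence of an exact witness $N$) and in the superadditivity theorem, both of which are proved above. The only care required is the elementary estimate $mN/n \to 1$ for the non-multiples of $N$ (equivalently, the standard proof of Fekete's lemma), which guarantees that the terms with $n$ not divisible by $N$ cannot spoil the limit.
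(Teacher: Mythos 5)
Your proposal is correct and follows essentially the same route as the paper: both rest on the superadditivity of $\alpha_n(G)$, the upper bound $\alpha_n(G) \leq n\,\alpha^{*}(G)$ from Remark \ref{remark:enqu}, and the exact witness $\alpha_N(G) = N\alpha^{*}(G)$ from Theorem \ref{thm:oneN}, with your ``direct'' division-with-remainder argument being precisely the paper's sandwich $(M-N)\alpha^{*}(G) \leq \alpha_M(G) \leq M\alpha^{*}(G)$. Packaging the step as Fekete's lemma is a cosmetic difference only.
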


\begin{proof}
First, set $N$ to be the large number for which Theorem \ref{thm:oneN} holds. Now, consider a number $M = Nq + r$, where $q, r \in \mathbb{N}$ and $r < N$. By the superadditivity of the generalized independence number, we have $(M-N) \alpha^{*}(G) \leq qN \alpha^{*}(G) \leq qN \alpha^{*}(G) + \alpha_{r}(G) \leq \alpha_{M}(G)$. Additionally, using Remark \ref{remark:enqu}, we have $\alpha_{M}(G) \leq M \alpha^{*}(G)$.

Now, we can conclude that:
\begin{equation*}
\lim\limits_{n \to \infty} \frac{(n-N) \alpha^{*}(G)}{n} \leq \lim\limits_{n \to \infty} \frac{\alpha_{n}(G)}{n} \leq \lim\limits_{n \to \infty} \frac{n \alpha^{*}(G)}{n}
\end{equation*}
Since $N$ is a constant, we can simplify further and state that $\lim\limits_{n \to \infty} \frac{\alpha_{n}(G)}{n} = \alpha^{*}(G)$. Thus, the theorem is proved.

The similarity of the above theorem with the definition of the fractional chromatic number is really interesting. If we take the dual of the integer program used to calculate the $k$-generalized independent number of a graph $G$, it corresponds to covering the vertices of $G$ with cliques such that each vertex is covered by at least $k$ cliques. If we assign different colors to each vertex in different cliques, it becomes a $k$-fold coloring of the vertices of $G^c$, which is one of the ways we define the fractional chromatic number.
\end{proof}
\end{document}